\def\rr{{\mathbb R}}
\def\rn{{{\rr}^n}}
\def\fz{\infty}
\def\az{\alpha}
\renewcommand\tilde{\widetilde}
\def\supp{{\rm{\,supp\,}}}
\def\ls{\lesssim}
\def\lz{\lambda}
\def\bz{\beta}
\def\gz{{\gamma}}
\def\sz{\sigma}
\def\hs{\hspace{0.3cm}}
\def\r{\right}
\def\lf{\left}
\def\bint{{\ifinner\rlap{\bf\kern.30em--}
\int\else\rlap{\bf\kern.35em--}\int\fi}\ignorespaces}
\def\sbint{{\ifinner\rlap{\bf\kern.32em--}
\hspace{0.078cm}\int\else\rlap{\bf\kern.45em--}\int\fi}\ignorespaces}
\newtheorem{theorem}{Theorem}[section]
\newtheorem{lemma}[theorem]{Lemma}
\newtheorem{corollary}[theorem]{Corollary}
\theoremstyle{definition}
\newtheorem{definition}[theorem]{Definition}
\numberwithin{equation}{section}
\numberwithin{equation}{section}
\numberwithin{equation}{section}
\begin{document}

\arraycolsep=1pt

\title{\Large\bf Parameterized Littlewood-Paley operators with variable kernels on Hardy spaces and weak Hardy spaces
\footnotetext{\hspace{-0.35cm}
{\it 2010 Mathematics Subject Classification}.
{Primary 42B25; Secondary 42B30.}
\endgraf{\it Key words and phrases.}  Littlewood-Paley operator, Hardy space, variable kernel.
}}
\author{Li Bo
}
\date{  }
\maketitle

\vspace{-0.8cm}

\begin{minipage}{14.5cm}\small
{
\noindent
{\bf Abstract.}
In this paper, by using the atomic decomposition theory of Hardy space and weak Hardy space,
we discuss the boundedness of parameterized Littlewood-Paley operator with variable kernel
on these spaces.
}
\end{minipage}



\section{Introduction}\label{s1}
Let $S^{n-1}$ be the unit sphere in the $n$-dimensional Euclidean space $\rn \ (n\ge2)$ with normalized Lebesgue measure $d\sigma$.
A function $\Omega(x,\,z)$ defined on $\rn\times\rn$ is said to be in $L^\fz(\rn)\times L^q(S^{n-1})$ with $q\ge1$,
if $\Omega(x,\,z)$ satisfies the following conditions:
\begin{align}\label{e1.1}
\Omega(x,\,\lz z)=\Omega(x,\,z) \ \mathrm{for} \ \mathrm{any} \ x,\,z\in\rn \ \mathrm{and} \ \lz>0,
\end{align}
\begin{align}\label{e1.2}
\int_{S^{n-1}}\Omega(x,\,z)\,d\sigma(z')=0 \ \mathrm{for} \ \mathrm{any} \ x\in\rn,
\end{align}
\begin{align}\label{e1.3}
\sup_{\substack{x\in\rn \\ r\ge0}}\lf(\int_{S^{n-1}}|\Omega(x+rz',\,z')|^q\,d\sigma(z')\r)^{1/q}<\fz,
\end{align}
where $z':=z/{|z|}$ for any $z\neq{\mathbf{0}}$.
The {\it {singular integral operator with variable kernel}} is defined by
\begin{align*}
T_\Omega(f)(x):=\mathrm{p.v.}\int_\rn \frac{\Omega(x,\,x-y)}{|x-y|^n}f(y)\,dy.
\end{align*}

In 1955 and 1956, Calder\'{o}n and Zygmund \cite{cz55,cz56} investigated the $L^p$ boundedness of $T_\Omega$.
They found that these operators are closely related to the problem about the second-order linear elliptic
equations with variable coefficients.
In 2011, Chen and Ding \cite{cd11} consider the same problem for
the {\it{parameterized Littlewood-Paley operators with variable kernel}}
$\mu_{\Omega,\,S}^\rho$ and $\mu^{\rho,\,\ast}_{\Omega,\,\lz}$ defined by, respectively,
\begin{align*}
\mu^\rho_{\Omega,\,S}(f)(x):=\lf(\int\int_{|y-x|<t}\lf|
\int_{|y-z|<t}\frac{\Omega(y,\,y-z)}{|y-z|^{n-\rho}}f(z)\,dz\r|^2\,\frac{dydt}{t^{n+2\rho+1}}\r)^{1/2}
\end{align*}
and
\begin{align*}
\mu^{\rho,\,\ast}_{\Omega,\,\lz}(f)(x):
=\lf[\int\int_{{\mathbb{R}}^{n+1}_+}\lf(\frac{t}{t+|x-y|}\r)^{\lz n}\lf|
\int_{|y-z|<t}\frac{\Omega(y,\,y-z)}{|y-z|^{n-\rho}}f(z)\,dz\r|^2\,\frac{dydt}{t^{n+2\rho+1}}\r]^{1/2},
\end{align*}
where $0<\rho<n$ and $1<\lz<\fz$.
They obtained the following result:
\begin{flushleft}
{\bf{Theorem A.}} (\cite[Theorem 5.1]{cd11})
{\it{
Let $0<\rho<n$, $1<\lz<\fz$ and $4\le p<\fz$.
If $\Omega(x,z)\in L^\fz(\rn)\times L^2(S^{n-1})$,
then there exists a positive constant $C$ independent of $f$ such that
\begin{align*}
\lf\|\mu^\rho_{\Omega,\,S}(f)\r\|_{L^p(\rn)}\le 2^{\lz n}\lf\|\mu^{\rho,\,\ast}_{\Omega,\,\lz}(f)\r\|_{L^p(\rn)}\le C\lf\|f\r\|_{L^p(\rn)}.
\end{align*}
}}
\end{flushleft}

On the other hand, as everyone knows, many important operators
are better behaved on Hardy space $H^p(\rn)$ than on Lebesgue $L^p(\rn)$
space in the range $p\in(0,\,1]$.
For example, when $p\in(0,\,1]$, the Riesz transforms are bounded on Hardy space $H^p(\rn)$,
but not on the corresponding Lebesgue space $L^p(\rn)$.
Therefore, one can consider $H^p(\rn)$ to be a very natural replacement for $L^p(\rn)$ when $p\in(0,\,1]$.
We refer to \cite{l95} for a complete survey of the real-variable theory of Hardy space.
Motivated by this, the question arises, when $p\in(0,\,1]$,
whether the operators $\mu_{\Omega,\,S}^\rho$ and $\mu^{\rho,\,\ast}_{\Omega,\,\lz}$ are bounded
from Hardy space $H^p(\rn)$ to Lebesgue space $L^p(\rn)$.
In this paper we shall answer this problem affirmatively. Not only that,
we also discuss boundedness of $\mu_{\Omega,\,S}^\rho$ and $\mu^{\rho,\,\ast}_{\Omega,\,\lz}$ from weak Hardy space $WH^p(\rn)$
to weak Lebesgue space $WL^p(\rn)$.

Precisely, the present paper is built up as follows.
In next section, we first recall some notions concerning variable kernel $\Omega(x,\,z)$.
Then we discuss the boundedness of $\mu_{\Omega,\,S}^\rho$ from (weak) Hardy space to (weak) Lebesgue space
(see Theorems \ref{dl.1}, \ref{dl.2} and \ref{dl.3} below).
Section \ref{s3} is devoted to establishing the boundedness of $\mu^{\rho,\,\ast}_{\Omega,\,\lz}$
from (weak) Hardy space to (weak) Lebesgue space (see Theorems \ref{dl.4}, \ref{dl.5} and \ref{dl.6} below).
In the last section, we will give some remarks for the above conclusions.
%
Throughout this paper the letter $C$ will denote a \emph{positive constant} that may vary
from line to line but will remain independent of the main variables.
The \emph{symbol} $P\ls Q$ stands for the inequality $P\le CQ$. If $P\ls Q\ls P$, we then write $P\sim Q$.
For any $1\le q\le\fz$, $q'$ denotes the {\it{conjugate index}} of $q$, namely, $1/q+1/{q'}=1$.




\section{Boundedness of $\mu_{\Omega,\,S}^\rho$ on $H^p(\rn)$ and $WH^p(\rn)$}\label{s2}

Before stating the main results of this scetion, we recall some notions about
the variable kernel $\Omega(x,\,z)$.
For any $0<\alpha\le1$, a function $\Omega(x,z)\in L^\fz(\rn)\times L^2(S^{n-1})$ is said to satisfy the
{\it{$L^{2,\,\az}$-Dini condition}}
if
\begin{align*}
\int_0^1\frac{\omega_2(\delta)}{\delta^{1+\alpha}}\,d\delta<\fz,
\end{align*}
where
\begin{align*}
{\omega_2(\delta)}:=\sup_{\substack{x\in\rn \\ r\ge0}}
\lf(\int_{S^{n-1}}\sup_{\substack{y'\in S^{n-1} \\ |y'-z'|\le\delta}}
\lf|\Omega(x+rz',\,y')-\Omega(x+rz',\,z')\r|^2\,d\sigma(z')\r)^{1/2}.
\end{align*}
On the other hand, for any $0<\az\le1$, a function $\Omega(x,z)\in L^\fz(\rn)\times L^2(S^{n-1})$ is said to satisfy the
{\it{Lipschitz condition of order $\az$}} if there exists a positive constant $C$ such that,
for any $x\in\rn$ and $y',\,z'\in S^{n-1}$,
\begin{align*}
|\Omega(x,\,y')-\Omega(x,\,z')|\le C|y'-z'|^\az.
\end{align*}
It is noteworthy that the relationship between $L^{2,\,\az}$-Dini condition and Lipschitz condition of order $\az$ is not clear up to now.

The main results of this section are as follows.
\begin{theorem}\label{dl.1}
Let $0<\az\le1$, $n/2<\rho<n$, $0<\bz<\min\{1/2,\,\az,\,\rho-n/2\}$ and $n/(n+\bz)<p<1$.
Suppose $\Omega(x,\,z)$ satisfies the $L^{2,\,\az}$-Dini condition or the Lipschitz condition of order $\az$.
Then $\mu_{\Omega,\,S}^\rho$ is bounded from $H^p(\rn)$ to $L^p(\rn)$.
\end{theorem}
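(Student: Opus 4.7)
The plan is to follow the standard atomic decomposition strategy for proving that an operator maps $H^p(\rn)$ boundedly into $L^p(\rn)$, adapted to the parameterized Littlewood-Paley setting.

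First I would invoke the atomic decomposition of $H^p(\rn)$: any $f\in H^p(\rn)$ can be written as $f=\sum_{k}\lz_{k}a_{k}$ (with convergence in $H^p$), where each $a_k$ is a $(p,2,s)$-atom (with $s\ge\lfloor n(1/p-1)\rfloor=0$ since $p>n/(n+\bz)>n/(n+1)$) supported in a ball $B_k=B(x_k,r_k)$, satisfying $\|a_k\|_{L^{2}}\le |B_k|^{1/2-1/p}$ and $\int_{\rn}a_k=0$, with $\sum_k|\lz_k|^p\ls\|f\|_{H^p}^p$. Since $p<1$, $\|\mu^\rho_{\Omega,S}(f)\|_{L^p}^p \le \sum_k|\lz_k|^p\|\mu^\rho_{\Omega,S}(a_k)\|_{L^p}^p$, so it suffices to bound $\|\mu^\rho_{\Omega,S}(a)\|_{L^p}$ uniformly over all such atoms $a$ supported in $B=B(x_0,r)$.

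Next I would split $\rn = 2B \cup (2B)^c$. For the local piece, H\"{o}lder's inequality gives $\|\mu^\rho_{\Omega,S}(a)\|_{L^p(2B)} \le |2B|^{1/p-1/2}\|\mu^\rho_{\Omega,S}(a)\|_{L^2(\rn)}$; by the known $L^2$ boundedness of $\mu^\rho_{\Omega,S}$ for $\Omega\in L^\fz(\rn)\times L^2(S^{n-1})$ (a classical consequence of the Calder\'{o}n--Zygmund theory used to obtain Theorem A) and the $L^2$-normalization of the atom, this piece is controlled by an absolute constant. For the non-local piece, I would derive a pointwise estimate: using $\int a=0$, write
\begin{align*}
\int_{|y-z|<t}\frac{\Omega(y,y-z)}{|y-z|^{n-\rho}}a(z)\,dz
=\int\lf[\chi_{|y-z|<t}\frac{\Omega(y,y-z)}{|y-z|^{n-\rho}}-\chi_{|y-x_0|<t}\frac{\Omega(y,y-x_0)}{|y-x_0|^{n-\rho}}\r]a(z)\,dz,
\end{align*}
and control the bracketed difference by (i) the smoothness in the radial variable, $||y-z|^{-(n-\rho)}-|y-x_0|^{-(n-\rho)}|\ls r/|y-x_0|^{n-\rho+1}$ (mean value), (ii) the smoothness in the angular variable $\Omega(y,\cdot)$, which under the Lipschitz hypothesis gives a factor $(r/|y-x_0|)^\az$ and under the $L^{2,\az}$-Dini hypothesis gives an analogous factor after Cauchy--Schwarz on $S^{n-1}$ and the Dini integral, and (iii) the indicator-function change, supported on the thin annulus $\{t: ||y-x_0|-t|\ls r\}$. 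Plugging these bounds into the $L^2(dy\,dt/t^{n+2\rho+1})$ integral defining $\mu^\rho_{\Omega,S}(a)(x)$, and restricting to the effective region $t\gtrsim |x-x_0|$ forced by $|y-x|<t$ and $z\in B$, will yield a pointwise bound of the form $\mu^\rho_{\Omega,S}(a)(x)\ls r^{n(1-1/p)+\bz}|x-x_0|^{-(n+\bz)}$ for any $\bz<\min\{1/2,\az,\rho-n/2\}$; the three constraints arise respectively from convergence of the $t$-integral against the indicator-annulus term, the kernel regularity, and the decay of $|y-x_0|^{-(n-\rho+\bz)}$ in the $L^2(dy)$ integral. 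Raising to the $p$-th power and integrating over $(2B)^c$ converges precisely when $p>n/(n+\bz)$, giving the required uniform bound.

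The main obstacle will be step (ii)--(iii) of the pointwise estimate: handling simultaneously the change in angular variable of $\Omega$ (especially under the weaker $L^{2,\az}$-Dini condition, where Cauchy--Schwarz and a careful splitting of the $\dz$-range via $\omega_2$ are needed) and the indicator-function jump, while still extracting the full factor $r^\bz$ with the sharp $\bz$ determined by the three-way minimum in the hypotheses. Packaging these estimates in such a way that the $L^2$-in-$(y,t)$ norm survives the integration, and then checking integrability in $(2B)^c$ with the sharp exponent $p>n/(n+\bz)$, is where the parameter constraints $\bz<1/2$, $\bz<\az$, and $\bz<\rho-n/2$ will all be used.
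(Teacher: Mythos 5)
Your overall architecture is the same as the paper's: decompose $f$ into atoms, reduce to a uniform bound $\|\mu^\rho_{\Omega,S}(a)\|_{L^p}\ls 1$, treat the part near the ball by H\"older against a known $L^q$ bound, and treat the far part by a pointwise decay estimate of the form $\|a\|\,r^{\,\cdot+\bz}|x-x_0|^{-(n+\bz)}$, whose integrability over the complement of a dilate of $B$ is exactly where $p>n/(n+\bz)$ enters. Your identification of the three sources of the constraints $\bz<1/2$ (the thin annulus where the indicator $\chi_{|y-z|<t}$ changes), $\bz<\az$ (angular regularity via the Dini/Lipschitz hypothesis), and $\bz<\rho-n/2$ (decay of the $L^2(dy)$ integral) matches the roles these play in the paper's Lemma \ref{yl.1} (its regions ${\rm I_2}$, ${\rm I_{31}}$, and ${\rm I_1}$/${\rm I_{32}}$ respectively), and your observation that the support conditions force $t\gtrsim|x-x_0|$ is the key geometric fact used there.

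There is, however, one genuine gap: for the local piece you take $(p,2,s)$-atoms and invoke ``the known $L^2$ boundedness of $\mu^\rho_{\Omega,S}$ for $\Omega\in L^\fz(\rn)\times L^2(S^{n-1})$'' as ``a classical consequence of Calder\'on--Zygmund theory.'' No such $L^2$ bound is available under this rough hypothesis on the angular variable: the only mapping property at hand is Theorem A, which gives $L^p$ boundedness precisely for $4\le p<\fz$, and that restriction is essential to the known theory for these variable-kernel square functions (indeed, the whole point of the paper's Corollaries \ref{tl.1}--\ref{tl.2} is to recover $1<p<4$ only under an \emph{additional} Dini-type condition, and only by interpolating with a nontrivial $H^1\to L^1$ result). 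So as written, your local estimate rests on an unproven ingredient. The repair is routine and is exactly what the paper does: work with $(p,\fz,s)$-atoms (or $(p,q,s)$-atoms with $q\ge4$), apply H\"older with exponent $4$ on the dilated ball, and use the $L^4$ bound from Theorem A; the far-field pointwise estimate goes through with $\|a\|_{L^\fz}|B|$ in place of $\int_B|a|$. (A secondary, minor point: your single global subtraction of $\chi_{|y-x_0|<t}\Omega(y,y-x_0)|y-x_0|^{\rho-n}$ must still be accompanied by a case analysis in $(y,t)$ --- in particular the region where $y$ lies in a fixed dilate of $B$, where no cancellation is usable and one needs $\rho>n/2$ for absolute convergence; the paper carries this out as the splitting ${\rm I_1}+{\rm I_2}+{\rm I_3}$, and you should make sure your ``thin annulus'' bookkeeping covers that region as well.)
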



\begin{theorem}\label{dl.2}
Let $n/2<\rho<n$ and $\Omega(x,\,z)\in L^\fz(\rn) \times L^2(S^{n-1})$.
If
\begin{align*}
\int_0^1\frac{\omega_2(\delta)}{\delta}(1+|\log{\delta}|)^\sz\,d\delta<\fz \ \mathrm{for \ some} \ \sz>1,
\end{align*}
then $\mu_\Omega^\rho$ is bounded from $H^1(\rn)$ to $L^1(\rn)$.
\end{theorem}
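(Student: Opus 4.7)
The plan is to establish the theorem via the atomic decomposition of $H^1(\rn)$, reducing the proof to showing that $\|\mu_{\Omega,\,S}^\rho(a)\|_{L^1(\rn)}\ls 1$ uniformly over $(1,2,0)$-atoms $a$. Let such an atom $a$ be supported in a ball $B:=B(x_0,r)$, satisfying $\|a\|_{L^2(\rn)}\le|B|^{-1/2}$ and $\int_\rn a(z)\,dz=0$. I would split
\begin{align*}
\|\mu_{\Omega,\,S}^\rho(a)\|_{L^1(\rn)}=\int_{2B}\mu_{\Omega,\,S}^\rho(a)(x)\,dx+\int_{(2B)^c}\mu_{\Omega,\,S}^\rho(a)(x)\,dx=:\mathrm{I}+\mathrm{II}.
\end{align*}

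The local piece $\mathrm{I}$ is dispatched by Cauchy--Schwarz and the $L^2$-boundedness of $\mu_{\Omega,\,S}^\rho$ (valid under $\Omega\in L^\fz(\rn)\times L^2(S^{n-1})$ by a standard Plancherel-type argument in the spirit of Chen--Ding), giving $\mathrm{I}\le|2B|^{1/2}\|\mu_{\Omega,\,S}^\rho(a)\|_{L^2(\rn)}\ls r^{n/2}\|a\|_{L^2(\rn)}\ls 1$.

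The main content is the tail estimate $\mathrm{II}$. Exploiting $\int a=0$, I would rewrite the inner integral as
\begin{align*}
\int_{|y-z|<t}\frac{\Omega(y,y-z)}{|y-z|^{n-\rho}}a(z)\,dz=\int_B\lf[\frac{\Omega(y,y-z)\chi_{\{|y-z|<t\}}}{|y-z|^{n-\rho}}-\frac{\Omega(y,y-x_0)\chi_{\{|y-x_0|<t\}}}{|y-x_0|^{n-\rho}}\r]a(z)\,dz.
\end{align*}
On the set where both indicators equal $1$, the bracket is controlled by splitting the difference into a radial part (mean value theorem applied to $|y-\cdot|^{-(n-\rho)}$) and an angular part (the modulus $\omega_2$ applied to $(y-z)/|y-z|$ and $(y-x_0)/|y-x_0|$, which differ by $O(r/|y-x_0|)$); on the complementary set of indicator mismatch one has $|y-x_0|\sim t$, and the sharp size bound of the kernel suffices. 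Plugging these pointwise estimates into the square-function definition, integrating in $y$ and $t$, and decomposing $(2B)^c$ into dyadic annuli $2^{k+1}B\setminus 2^k B$, $k\ge 1$, I expect to arrive at
\begin{align*}
\mathrm{II}\ls\int_0^1\frac{\omega_2(\dz)}{\dz}(1+|\log\dz|)^{\sz}\,d\dz<\fz
\end{align*}
by hypothesis. The logarithmic weight with exponent $\sz>1$ is precisely what is needed to absolutely sum the geometric-logarithmic series produced by pairing the dyadic annular decomposition in $k$ with the $t$-integration over the cone $\{(y,t):|y-x|<t\}$.

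The chief difficulty lies in the bookkeeping of this tail estimate: one must carefully propagate the modulus $\omega_2$ through the iterated $y$- and $t$-integrations of the square function, correctly identify the exact logarithmic factor produced by intersecting the cone with the annular regions, and verify that the boundary contributions from the indicator mismatch are of acceptable order. The appearance of the weight $(1+|\log\dz|)^{\sz}$ with $\sz>1$---rather than a plain Dini condition---is exactly what is forced by the cone geometry at the $H^1$ endpoint, and this is the feature distinguishing Theorem \ref{dl.2} from the sub-endpoint statement of Theorem \ref{dl.1}.
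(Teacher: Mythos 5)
First, a point of comparison: the paper does not actually prove Theorem \ref{dl.2}; its ``proof'' consists of the single remark that one should proceed as in \cite[Theorem 1]{dlx07a} (the constant-kernel case) and that the statement ``is quite believable'' in the variable-kernel setting. Your outline is therefore already more explicit than what the paper offers, and it follows the standard template (atomic decomposition, boundedness for the local piece, cancellation plus a Dini-type modulus for the tail) that \cite{dlx07a} uses. In that sense the approach is the right one.

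Nevertheless there are two genuine gaps. (1) Your local estimate invokes the $L^2(\rn)$-boundedness of $\mu_{\Omega,\,S}^\rho$ for $\Omega\in L^\fz(\rn)\times L^2(S^{n-1})$ ``by a standard Plancherel-type argument''. No such result is available here: for \emph{variable} kernels the Plancherel argument does not apply (the kernel depends on $y$), and the only boundedness at the paper's disposal is Theorem A of Chen--Ding \cite{cd11}, which requires $4\le p<\fz$. This is precisely why the paper's proof of Theorem \ref{dl.1} works with $(p,\fz,s)$-atoms and estimates the local piece by H\"older against the $L^4$ norm; you should do the same with $(1,\fz,0)$-atoms, since with $(1,2,0)$-atoms and only $L^4$ control the local piece does not close. (2) The entire content of the theorem is the tail estimate $\mathrm{II}$, and your treatment of it stops at ``I expect to arrive at''. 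In particular you never exhibit where the hypotheses $n/2<\rho<n$ and $\sz>1$ enter: the restriction $\rho>n/2$ is what makes the kernel-difference bound of Lemma \ref{l3.6} (the factor $R^{\rho-n/2}$, played against the $t$-integration $\int_{|y-x_0|}^\fz t^{-(n+2\rho+1)}\,dt$ coming from the cone) summable over dyadic annuli, and the exponent $\sz>1$ is what allows one to sum, typically via Cauchy--Schwarz with weights $j^{\pm\sz}$, the series in $j$ in which the annulus $2^{j+1}B\setminus 2^{j}B$ contributes an integral of $\omega_2(\dz)/\dz$ over $\dz\sim 2^{-j}$ with no geometric gain (at the endpoint $p=1$ the exponent $\bz$ of Lemma \ref{yl.1} degenerates to $0$). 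Identifying and verifying these two mechanisms is the actual mathematical work, and the proposal does not carry it out, so as it stands it is an accurate plan rather than a proof.
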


\begin{theorem}\label{dl.3}
Let $0<\az\le1$, $n/2<\rho<n$, $0<\bz<\min\{1/2,\,\az,\,\rho-n/2\}$ and $n/(n+\bz)<p\le1$.
Suppose $\Omega(x,\,z)$ satisfies the $L^{2,\,\az}$-Dini condition or the Lipschitz condition of order $\az$.
Then $\mu_{\Omega,\,S}^\rho$ is bounded from $WH^p(\rn)$ to $WL^p(\rn)$.
\end{theorem}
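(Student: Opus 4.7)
The plan is to deduce Theorem \ref{dl.3} from the atom-level estimates already needed for Theorem \ref{dl.1}, together with the atomic decomposition of the weak Hardy space $WH^p(\rn)$ in the spirit of Fefferman--Soria. The underlying philosophy is: once one has, for a single $H^p$-atom $a$ supported in a ball $B$, both a global $L^2$-estimate (via Theorem~A) and a pointwise decay estimate for $\mu^\rho_{\Omega,S}(a)(x)$ when $x\notin cB$, the step from $H^p\to L^p$ to $WH^p\to WL^p$ becomes a bookkeeping argument about level sets.

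More precisely, I would proceed in the following order. First, I would recall (or formulate) the atomic decomposition for $WH^p(\rn)$: every $f\in WH^p(\rn)$ with $\|f\|_{WH^p}=1$ admits a representation
\begin{align*}
f=\sum_{k\in\zz}\sum_{i} \lz_{k,i}\,a_{k,i},
\end{align*}
where each $a_{k,i}$ is a $(p,2,s)$-atom supported in a ball $B_{k,i}$, the coefficients satisfy $|\lz_{k,i}|\ls 2^k|B_{k,i}|^{1/p}$, and $\sum_{i}|B_{k,i}|\ls 2^{-kp}$. Next, for an arbitrary $\gz>0$ I would pick $k_0\in\zz$ with $2^{k_0}\le \gz<2^{k_0+1}$ and split $f=F_1+F_2$ with
\begin{align*}
F_1=\sum_{k\le k_0}\sum_i \lz_{k,i}a_{k,i},\qquad F_2=\sum_{k>k_0}\sum_i \lz_{k,i}a_{k,i}.
\end{align*}
Then the level set $\{x:\mu^\rho_{\Omega,S}(f)(x)>\gz\}$ splits via the triangle inequality into contributions from $F_1$ and $F_2$.

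For the ``small-coefficient'' piece $F_1$, I would use Chebyshev together with the $L^2$-boundedness of $\mu^\rho_{\Omega,S}$ from Theorem~A: expand $\|\mu^\rho_{\Omega,S}(F_1)\|_{L^2}^2$ via $F_1=\sum_{k\le k_0}\sum_i \lz_{k,i}a_{k,i}$, use $\|a_{k,i}\|_{L^2}\ls|B_{k,i}|^{1/2-1/p}$, and combine with $|\lz_{k,i}|\ls 2^k|B_{k,i}|^{1/p}$ to obtain a bound of the form $\ls\gz^{2-p}$; dividing by $\gz^2$ yields the desired $\ls\gz^{-p}$. For the ``large-coefficient'' piece $F_2$, I would set
\begin{align*}
E:=\bigcup_{k>k_0}\bigcup_i c B_{k,i},
\end{align*}
where $cB_{k,i}$ is a suitable dilate, so that $|E|\ls\sum_{k>k_0}2^{-kp}\ls 2^{-k_0 p}\ls\gz^{-p}$. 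Outside $E$, I would invoke the pointwise bound
\begin{align*}
\mu^\rho_{\Omega,S}(a_{k,i})(x)\ls |B_{k,i}|^{(n+\bz)/(np)-1/p}\,|x-x_{k,i}|^{-(n+\bz)/p}\cdot|B_{k,i}|^{1/p},
\end{align*}
derived in the proof of Theorem \ref{dl.1} from the $L^{2,\az}$-Dini (or Lipschitz) hypothesis with $0<\bz<\min\{1/2,\az,\rho-n/2\}$; summing in $i$ and $k>k_0$, then applying Chebyshev on $\rn\setminus E$, should again produce the bound $\gz^{-p}$.

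The main technical obstacle, as in analogous weak-Hardy arguments, is the large-coefficient part: one has to verify that, after summing the pointwise estimate over $i$ (inside a fixed level $k$) and then over $k>k_0$, the series genuinely converges and telescopes into $\gz^{-p}$. This is precisely where the restriction $p>n/(n+\bz)$ enters: it makes $(n+\bz)/p>n$, giving integrable tails for the pointwise bound, and simultaneously makes $\sum_{k>k_0} 2^{-(k-k_0)\eta}$ geometric for an appropriate $\eta>0$ after the Chebyshev step. Once the book-keeping is done carefully, combining the $F_1$- and $F_2$-estimates yields $|\{x:\mu^\rho_{\Omega,S}(f)(x)>\gz\}|\ls\gz^{-p}\|f\|_{WH^p(\rn)}^p$ uniformly in $\gz>0$, which is exactly the $WH^p\to WL^p$ boundedness asserted.
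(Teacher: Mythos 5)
Your overall strategy coincides with the paper's: decompose $f\in WH^p(\rn)$ atomically by levels, split at $k_0$ with $2^{k_0}\le \lambda<2^{k_0+1}$, treat the low levels by Chebyshev plus an $L^q$ bound, and treat the high levels by an exceptional set plus the pointwise decay of $\mu^\rho_{\Omega,S}$ on atoms away from their supports (Lemma \ref{yl.1}). Two points in your sketch, however, would fail as written. First, you invoke ``the $L^2$-boundedness of $\mu^\rho_{\Omega,S}$ from Theorem~A,'' but Theorem~A only provides $L^p$ boundedness for $4\le p<\fz$; no $L^2$ bound is available for these rough variable kernels. The repair is what the paper does: estimate $\|F_1\|_{L^4(\rn)}\ls\lambda^{1-p/4}\|f\|^{p/4}_{WH^p(\rn)}$ by Minkowski (using $\|b^k_i\|_{L^\fz(\rn)}\ls 2^k$ and the bounded overlap of the $B^k_i$), and then apply Chebyshev at exponent $4$, which still yields $\lambda^{-4}\|F_1\|^4_{L^4(\rn)}\ls\lambda^{-p}\|f\|^p_{WH^p(\rn)}$.

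Second, and more substantively, the ``suitable dilate'' in your exceptional set $E=\bigcup_{k>k_0}\bigcup_i cB_{k,i}$ cannot be a fixed dilate. With $c$ fixed, the contribution of each level $k>k_0$ to $\int_{E^\complement}|\mu^\rho_{\Omega,S}(F_2)|^p$ is, after applying Lemma \ref{yl.1} and integrating the tail $|x-x^k_i|^{-(n+\bz)p}$ (integrable precisely because $p>n/(n+\bz)$), of size $\sum_i 2^{kp}|B^k_i|\ls\|f\|^p_{WH^p(\rn)}$, uniformly in $k$, so the sum over $k>k_0$ diverges. The paper's device is to let the dilate grow geometrically with the level: $\tilde{B^k_i}:=B(x^k_i,\,64(3/2)^{(k-k_0)p/n}r^k_i)$. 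The measure of $\bigcup\tilde{B^k_i}$ is still $\ls\lambda^{-p}\|f\|^p_{WH^p(\rn)}$ because $(3/2)^p<2^p$, while on the complement the tail integral picks up the extra factor $(2/3)^{p(np+\bz p-n)(k-k_0)/n}$, which is summable in $k$ again exactly because $p>n/(n+\bz)$. This expanding-dilate construction is the one genuinely new ingredient beyond Theorem \ref{dl.1}, and it is the piece your ``careful bookkeeping'' has to supply. (Also note that your displayed pointwise bound carries the decay $|x-x_{k,i}|^{-(n+\bz)/p}$; the correct exponent from Lemma \ref{yl.1} is $-(n+\bz)$, and it is only after raising to the $p$-th power that the condition $(n+\bz)p>n$ enters.)
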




To show the above theorems, we need the following definition of atom.
\begin{definition}\label{d2.11}{\rm{(\cite{l95})}}
Let $0<p\le1$ and the nonnegative integer $s\ge \lfloor n(1/p-1)\rfloor$
($\lfloor x\rfloor$ denotes the integral part of real number $x$).
A function $a(x)$ is called a {\it $(p,\,\fz,\,s)$-atom} associated with some ball $B\subset\rn$
if it satisfies the following three conditions:
\begin{enumerate}
\item[\rm{(i)}] $a$ is supported in $B$;
\item[\rm{(ii)}] $\|a\|_{L^\fz(\rn)}\leq |B|^{-1/p}$;
\item[\rm{(iii)}] $\int_\rn a(x)x^\gz dx=0$ for any multi-index $\gz$ with $|\gz|\leq s$.
\end{enumerate}
\end{definition}

\begin{lemma}\label{l3.6}{\rm{(\cite{dll07})}}
Let $0<\rho<n$. Suppose $\Omega(x,\,z)\in L^\fz(\rn) \times L^2(S^{n-1})$.
If there exists a constant $0<\bz<1/2$ such that $|z|<\bz R$, then, for any $h\in\rn$,
\begin{align*}
\int_{R\le |y|<2R}\lf|\frac{\Omega(y+h,\,y-z)}{|y-z|^{n-\rho}}-\frac{\Omega(y+h,\,y)}{|y|^{n-\rho}}\r|\,dy
\leq CR^{\rho-n/2}\lf({\frac{|z|}{R}}+\int_{{2|z|}/{R}}^{{4|z|}/{R}}\frac{\omega_2(\delta)}{\delta}d\delta\r),
\end{align*}
where the positive constant $C$ is independent of $R$ and $y$.
\end{lemma}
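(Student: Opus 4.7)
The plan is to decompose the integrand on the annulus $A:=\{y\in\rn:R\le|y|<2R\}$ as
\begin{align*}
\frac{\Omega(y+h,y-z)}{|y-z|^{n-\rho}}-\frac{\Omega(y+h,y)}{|y|^{n-\rho}}=\mathrm{I}(y)+\mathrm{II}(y),
\end{align*}
where
\begin{align*}
\mathrm{I}(y):=\Omega(y+h,y-z)\lf(\frac{1}{|y-z|^{n-\rho}}-\frac{1}{|y|^{n-\rho}}\r)
\end{align*}
isolates the regularity of the radial kernel, and
\begin{align*}
\mathrm{II}(y):=\frac{\Omega(y+h,y-z)-\Omega(y+h,y)}{|y|^{n-\rho}}
\end{align*}
isolates the angular regularity of $\Omega$. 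The hypotheses $|z|<\bz R<R/2$ and $R\le|y|<2R$ together force $|y-z|\sim|y|\sim R$ on $A$, which allows each piece to be handled by a method tailored to the type of regularity it encodes.

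For the integral of $|\mathrm{I}|$, the mean value theorem applied to $t\mapsto t^{\rho-n}$ gives the pointwise bound
\begin{align*}
\lf|\frac{1}{|y-z|^{n-\rho}}-\frac{1}{|y|^{n-\rho}}\r|\le C\frac{|z|}{R^{n-\rho+1}}
\end{align*}
on $A$. The remaining factor $|\Omega(y+h,y-z)|$ is handled by the change of variable $u:=y-z$ followed by polar coordinates $u=tu'$; by the $0$-homogeneity of $\Omega$ in its second variable, the integrand becomes $\Omega(tu'+z+h,u')=\Omega(x+tu',u')$ with $x:=z+h$, which is precisely the form to which condition \eqref{e1.3} with $q=2$ applies. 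After Cauchy--Schwarz first on $A$ and then on $S^{n-1}$, the uniform spherical bound produces the $|z|/R$ summand of the claimed estimate.

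For the integral of $|\mathrm{II}|$, I change to polar coordinates $y=r\theta$ with $r\in[R,2R]$ and $\theta\in S^{n-1}$. The elementary inequality $|(y-z)/|y-z|-\theta|\le 2|z|/|y-z|\le 4|z|/r$ shows that the two angular arguments of $\Omega$ differ by at most $4|z|/r$, so the very definition of $\omega_2$, applied with base point $x=h$ and radial parameter $r$, gives, for each fixed $r$,
\begin{align*}
\lf(\int_{S^{n-1}}\lf|\Omega(r\theta+h,(r\theta-z)/|r\theta-z|)-\Omega(r\theta+h,\theta)\r|^2\,d\sigma(\theta)\r)^{1/2}\le\omega_2(4|z|/r).
\end{align*}
Inserting this into the polar representation and performing the change of variable $\delta:=4|z|/r$, which maps $r\in[R,2R]$ bijectively onto $[2|z|/R,4|z|/R]$, converts the $r$-integral into the announced expression $\int_{2|z|/R}^{4|z|/R}\omega_2(\delta)/\delta\,d\delta$.

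The main technical point is the careful bookkeeping of the prefactor $R^{\rho-n/2}$: this arises from combining $\||y|^{\rho-n}\|_{L^2(A)}\sim R^{\rho-n/2}$ with the $L^2$-form of the Dini modulus $\omega_2$ and with the $L^2$ spherical regularity encoded in \eqref{e1.3}. The Cauchy--Schwarz inequality must be applied on the annulus $A$ (where $|A|^{1/2}\sim R^{n/2}$) and on the sphere $S^{n-1}$ in exactly the right order so that the resulting exponents collapse to $R^{\rho-n/2}$; the constants $2$ and $4$ in $[2|z|/R,4|z|/R]$ serve to absorb the distortion arising from the inequality $|(y-z)/|y-z|-\theta|\le 4|z|/r$.
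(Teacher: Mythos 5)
Your decomposition into $\mathrm{I}$ and $\mathrm{II}$ is exactly the standard one (it is the splitting used in \cite{dll07}, which the paper cites instead of giving a proof), and the individual ingredients --- the mean value theorem for $t\mapsto t^{\rho-n}$, homogeneity plus \eqref{e1.3} for the size of $\Omega$, polar coordinates plus the definition of $\omega_2$ for the angular increment --- are the right tools. The gap is in the final bookkeeping, which you assert rather than carry out, and which in fact cannot be made to work for the statement as printed. Following your own steps: $\int_A|\mathrm{I}|\,dy\le C\frac{|z|}{R^{n-\rho+1}}\int_A|\Omega(y+h,y-z)|\,dy\le C\frac{|z|}{R^{n-\rho+1}}|A|^{1/2}\bigl(\int_A|\Omega|^2\,dy\bigr)^{1/2}\lesssim\frac{|z|}{R^{n-\rho+1}}\,R^{n/2}\,R^{n/2}=R^{\rho}\,\frac{|z|}{R}$, and similarly $\int_A|\mathrm{II}|\,dy\lesssim R^{\rho}\int_{2|z|/R}^{4|z|/R}\omega_2(\delta)\delta^{-1}\,d\delta$. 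The prefactor is $R^{\rho}$, not $R^{\rho-n/2}$; the extra $R^{n/2}$ coming from Cauchy--Schwarz on the annulus is unavoidable, and the $L^1$ inequality with the constant $R^{\rho-n/2}$ is actually false: take $\Omega$ depending only on $z'$, bounded above and below in modulus, so that $\omega_2\equiv0$; then the left-hand side is $\gtrsim R^{n}\cdot|z|R^{\rho-n-1}=R^{\rho}|z|/R$, which exceeds $R^{\rho-n/2}|z|/R$ for large $R$.

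The resolution is that the displayed inequality is missing a square and a square root: the estimate proved in \cite{dll07}, and the form in which this lemma is actually invoked later in the paper (in the estimate of $\mathrm{I_{31}}$, where the factor $(2^jr)^{\rho-n/2}$ is extracted from $\bigl(\int_{2^jr\le|y-x_0|<2^{j+1}r}|\cdots|^2\,dy\bigr)^{1/2}$), is the $L^2$ version $\bigl(\int_{R\le|y|<2R}|\cdots|^2\,dy\bigr)^{1/2}\le CR^{\rho-n/2}(\cdots)$. For that version your decomposition closes, and more simply than you suggest: no Cauchy--Schwarz on $A$ is needed at all, since $\bigl(\int_A|\mathrm{I}|^2\bigr)^{1/2}\le C\frac{|z|}{R^{n-\rho+1}}\bigl(\int_A|\Omega|^2\bigr)^{1/2}\lesssim R^{\rho-n/2}\frac{|z|}{R}$. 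For $\mathrm{II}$ you should replace the $r$-dependent bound $4|z|/r$ by the uniform bound $|(y-z)'-y'|\le 2|z|/|y|\le 2|z|/R$ valid on the whole annulus; this gives $\bigl(\int_A|\mathrm{II}|^2\bigr)^{1/2}\lesssim R^{\rho-n/2}\,\omega_2(2|z|/R)\le\frac{1}{\ln 2}R^{\rho-n/2}\int_{2|z|/R}^{4|z|/R}\omega_2(\delta)\delta^{-1}\,d\delta$ by the monotonicity of $\omega_2$, which is exactly the stated interval. Your change of variables $\delta=4|z|/r$ applied to the squared integrand would instead produce an $L^2$ average of $\omega_2$ over that interval, which dominates, rather than is dominated by, the $L^1$-type Dini integral appearing in the conclusion.
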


\begin{lemma}\label{yl.1}
Let $0<\az\le1$, $n/2<\rho<n$ and $0<\bz<\min\{1/2,\,\az,\,\rho-n/2\}$.
Suppose $\Omega(x,\,z)$ satisfies the $L^{2,\,\az}$-Dini condition or the Lipschitz condition of order $\az$.
If $a(x)$ is a $(p,\,\fz,\,s)$-atom associated with some ball $B:=B(x_0,\,r)$,
then there exists a positive constant $C$ independent of $a(x)$ such that, for any $x\in (64B)^\complement$,
\begin{align*}
\mu^\rho_{\Omega,\,S}(a)(x)\le C\|a\|_{L^\fz(\rn)}\frac{r^{n+\beta}}{|x-x_0|^{n+\beta}}.
\end{align*}
\end{lemma}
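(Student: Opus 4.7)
The plan is to bound $\mu^\rho_{\Omega,S}(a)(x)^2$ by a geometric decomposition of the area integral, exploiting the vanishing moment of $a$ together with the regularity of $\Omega$. Write $R := |x-x_0|$, so $R > 64r$, and set $K_y(z) := \Omega(y,y-z)/|y-z|^{n-\rho}$. The inner integral in $\mu^\rho_{\Omega,S}(a)(x)$ vanishes unless $B(y,t) \cap B(x_0,r) \neq \emptyset$, i.e.\ $|y-x_0| < t+r$; combined with the cone condition $|y-x| < t$ this forces $t > (R-r)/2 > R/4$. I would then split the $(y,t)$-region of integration into $E_A := \{t > |y-x_0|+r\}$ (where $B(x_0,r) \subset B(y,t)$) and its boundary $E_B := \{|y-x_0| - r < t \le |y-x_0|+r\}$, and write $\mu^\rho_{\Omega,S}(a)(x)^2 = I_A + I_B$.

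For $I_A$, the inner integral $F_t(y) := \int_{|y-z|<t} K_y(z) a(z)\,dz$ is independent of $t$ and equals $G(y) := \int_B K_y(z) a(z)\,dz$. Since $p > n/(n+\beta)$ and $\beta \le 1$, $\lfloor n(1/p-1)\rfloor = 0$, so the atom has mean zero and $G(y) = \int_B [K_y(z) - K_y(x_0)] a(z)\,dz$. I would split the $y$-integration: (a) for $|y - x_0| < 2r$ (so $|y-x| > R/2$), bound $|G(y)| \lesssim \|a\|_{L^\infty(\rn)}\, r^\rho$ via Cauchy--Schwarz in polar coordinates and $\|\Omega(y,\cdot)\|_{L^2(S^{n-1})} \lesssim 1$; the $t$-integration produces $R^{-(n+2\rho)}$, yielding a contribution $\|a\|_{L^\infty(\rn)}^2 (r/R)^{n+2\rho}$, which is dominated by $\|a\|_{L^\infty(\rn)}^2 (r/R)^{2(n+\beta)}$ precisely because $\beta < \rho - n/2$; (b) for $|y-x_0| \ge 2r$, dyadically decompose $|y-x_0| \in [T,2T]$, apply Cauchy--Schwarz in $z$ to get $|G(y)|^2 \le \|a\|_{L^\infty(\rn)}^2 |B| \int_B |K_y(z) - K_y(x_0)|^2\,dz$, and derive an $L^2$-analogue of Lemma \ref{l3.6} (by splitting the kernel difference into a numerator-change piece and a denominator-change piece, and invoking either the $L^{2,\alpha}$-Dini or the Lipschitz hypothesis on $\Omega$) to obtain the annular bound $\int_{T \le |y-x_0|<2T} |K_y(z)-K_y(x_0)|^2\,dy \lesssim T^{2\rho-n}[(r/T)^2 + \omega_2(Cr/T)^2]$. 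Dyadic summation, using either $\omega_2(\delta) \lesssim \delta^\alpha$ (Lipschitz) or the Dini integrability, then yields $I_A \lesssim \|a\|_{L^\infty(\rn)}^2 (r/R)^{2(n+\beta)}$.

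For $I_B$, the cancellation is only partial. I would decompose $F_t(y) = G_1(y,t) + G_2(y,t)$ where $G_1 := \int_{B \cap B(y,t)}[K_y(z) - K_y(x_0)] a(z)\,dz$ (estimated as in (b) above, with the restricted domain and exploiting that $E_B$ has $t$-thickness $\sim r$) and $G_2 := K_y(x_0) \int_{B \cap B(y,t)} a(z)\,dz$. The mean-zero property rewrites the inner integral in $G_2$ as $-\int_{B \setminus B(y,t)} a$, and a thin-shell (lens) estimate on $B \setminus B(y,t)$ -- whose measure is at most $r^n$, with a finer bound of order $\delta\, r^{n-1}$ where $\delta := r - \bigl|t - |y-x_0|\bigr|$ is the lens depth -- supplies the needed smallness. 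The pointwise factor $K_y(x_0)$ is not controlled by hypothesis, so I would integrate against $y$ first and tame it via the $L^2(S^{n-1})$ condition on $\Omega$. The constraint $\beta < 1/2$ is used here to make the resulting dyadic sum converge with the correct exponent.

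The main obstacle is the boundary estimate $I_B$: without full cancellation one must track the lens geometry of $B \setminus B(y,t)$ carefully and avoid pointwise evaluation of $K_y(x_0)$ by working in an integrated $L^2(S^{n-1})$ sense. The hypotheses $\beta < 1/2$ and $\beta < \rho - n/2$ are calibrated so that all the dyadic series produce the common exponent $n+\beta$ appearing in the claim.
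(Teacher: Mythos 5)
Your decomposition is essentially the one the paper uses: a near region where $|y-x_0|\lesssim r$ handled by the size condition alone (your $E_A$ case (a) is the paper's ${\rm I_1}$, and both hinge on the observation that the support and cone conditions force $t\gtrsim|x-x_0|$, with the exponent closing because $\bz<\rho-n/2$); a far region with $t>|y-x_0|+O(r)$ where the full ball sits inside $B(y,t)$, the mean-zero property is exploited, and a dyadic annular $L^2$ estimate for $K_y(z)-K_y(x_0)$ is summed using $\omega_2(\dz)\ls\dz^\az$ or the Dini integral (your $E_A$ case (b) is the paper's ${\rm I_3}$); and a boundary shell of $t$-thickness $O(r)$. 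The annular estimate you propose to "derive" is exactly the cited Lemma \ref{l3.6} (in its $L^2$ form, from the Ding--Lin--Lin erratum), so that ingredient is available rather than a gap. The one place you genuinely diverge is the boundary shell: you invoke partial cancellation, split off $K_y(x_0)\int_{B\cap B(y,t)}a$, and plan a lens-measure estimate for $B\setminus B(y,t)$ together with an integrated treatment of $K_y(x_0)$. This would work, but it is more machinery than needed: the paper's ${\rm I_2}$ uses no cancellation at all --- Minkowski's inequality, the size condition on $\Omega$, and the fact that for fixed $y$ the $t$-variable ranges over an interval of length $\sim r$ (so $\int dt/t^{n+2\rho+1}\ls r|y-x_0|^{-(n+2\rho+1)}$) already produce the gain $r^{1/2}\cdot r^{\bz-1/2}$, with $\bz<1/2$ ensuring convergence of $\int_r^\fz u^{2\bz-2}du$. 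Indeed, in your own scheme the crude bound $|B\setminus B(y,t)|\le|B|$ combined with the $O(r)$ shell thickness already yields a contribution of order $(r/|x-x_0|)^{2n+1}\le(r/|x-x_0|)^{2n+2\bz}$, so the finer lens geometry is superfluous. Apart from this avoidable complication, your plan is correct and calibrated to the same exponents as the paper's proof.
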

\begin{proof}
We only consider the case $\Omega(x,\,z)$ satisfies the $L^{2,\,\az}$-Dini condition.
In another case, the proof is easier and we leave the details to the interested reader.
The trick of the proof is to find a subtle segmentation.
To be precise, for any $x\in (64B)^\complement$, let us write
\begin{align*}
\mu^\rho_{\Omega,\,S}(a)(x)
&=\lf(\int\int_{|y-x|<t}\lf|\int_{|y-z|<t}\frac{\Omega(y,\,y-z)}{|y-z|^{n-\rho}}a(z)\,dz\r|^2\,\frac{dydt}{t^{n+2\rho+1}}\r)^{1/2} \\
&\le \lf(\int\int_{\substack{|y-x|<t \\ y\in 16B}}\lf|\int_{|y-z|<t}\frac{\Omega(y,\,y-z)}{|y-z|^{n-\rho}}a(z)\,dz\r|^2\,\frac{dydt}{t^{n+2\rho+1}}\r)^{1/2} \\
&\hs+\lf(\int\int_{\substack{|y-x|<t \\ y\in (16B)^\complement \\ t\le|y-x_0|+8r}}\cdot\cdot\cdot\r)^{1/2}
+\lf(\int\int_{\substack{|y-x|<t \\ y\in (16B)^\complement \\ t>|y-x_0|+8r}}\cdot\cdot\cdot\r)^{1/2}
=:{\rm{I_1+I_2+I_3}}.
\end{align*}

We estimate ${\rm{I_1}}$ first. By $x\in (64B)^\complement$, $y\in 16B$ and $z\in B$, we know that
$$t>|y-x|\ge|x-x_0|-|y-x_0|>|x-x_0|-|x-x_0|/4>|x-x_0|/2 \ {\rm{and}} \ |y-z|<32r.$$
From this, Minkowski's inequality for integrals, $\Omega(x,\,z)\in L^\fz(\rn)\times L^2(S^{n-1})$ and $0<\bz<\rho-n/2$,
it follows that, for any $x\in (64B)^\complement$,
\begin{align*}
{\rm{I_1}}
&\le \int_B|a(z)|\lf(\int\int_{\substack{t>|x-x_0|/2 \\ |y-z|<32r}}\frac{|\Omega(y,\,y-z)|^2}{|y-z|^{2n-2\rho}}\frac{dydt}{t^{n+2\rho+1}}\r)^{1/2}\,dz\\
&= \int_B|a(z)|\lf(\int_{|y-z|<32r}\frac{|\Omega(y,\,y-z)|^2}{|y-z|^{2n-2\rho}}\,dy\r)^{1/2}\lf(\int_{|x-x_0|/2}^\fz\frac{dt}{t^{n+2\rho+1}}\r)^{1/2}\,dz \\
&\sim \frac{1}{|x-x_0|^{n/2+\rho}}\int_B|a(z)|\lf(\int_{|y|<32r}\frac{|\Omega(z+y,\,y)|^2}{|y|^{2n-2\rho}}\,dy\r)^{1/2}\,dz \\
&\sim \frac{1}{|x-x_0|^{n/2+\rho}}\int_B|a(z)|\lf[\int_{0}^{32r}\lf(\int_{S^{n-1}}|\Omega(z+uy',\,y')|^2\,d\sigma(y')\r)\frac{u^{n-1}}{u^{2n-2\rho}}\,du\r]^{1/2}\,dz \\
&\ls \frac{1}{|x-x_0|^{n/2+\rho}}\int_B|a(z)|\lf(\int_{0}^{32r}u^{2\rho-n-1}\,du\r)^{1/2}\,dz \\
&\ls \|a\|_{L^\fz(\rn)}\frac{r^{n/2+\rho}}{|x-x_0|^{n/2+\rho}}\ls \|a\|_{L^\fz(\rn)}\frac{r^{n+\bz}}{|x-x_0|^{n+\bz}},
\end{align*}
which is wished.

Now we are interested ${\rm{I_2}}$. By $x\in (64B)^\complement$, $y\in (16B)^\complement$, $z\in B$ and the mean value theorem, we know that
\begin{align}\label{q1}
r<|y-z|\sim|y-x_0|;
\end{align}
\begin{align}\label{q3}
|x-x_0|\le|x-y|+|y-x_0|\le t+|y-x_0|\le 2|y-x_0|+8r\le 3|y-x_0|;
\end{align}
\begin{align}\label{q2}
|y-x_0|-2r\le|y-x_0|-|z-x_0|\le|y-z|<t\le|y-x_0|+8r;
\end{align}
\begin{align}\label{q4}
\lf|\frac{1}{(|y-x_0|-2r)^{n+2\rho}}-\frac{1}{(|y-x_0|+8r)^{n+2\rho}}\r|\ls\frac{r}{|y-x_0|^{n+2\rho+1}}.
\end{align}
From Minkowski's inequality for integrals, \eqref{q1}-\eqref{q4}, $\Omega(x,\,z)\in L^\fz(\rn)\times L^2(S^{n-1})$ and $\bz<1/2$, we deduce that, for any $x\in (64B)^\complement$,
\begin{align*}
{\rm{I_2}}
&=\lf(\int\int_{\substack{|y-x|<t \\ y\in (16B)^\complement \\ t\le|y-x_0|+8r}}\lf|\int_{|y-z|<t}\frac{\Omega(y,\,y-z)}{|y-z|^{n-\rho}}a(z)\,dz\r|^2\,\frac{dydt}{t^{n+2\rho+1}}\r)^{1/2} \\
&\le \int_B|a(z)| \lf(\int\int_{\substack{|y-x|<t \\ y\in (16B)^\complement \\ t\le|y-x_0|+8r \\ |y-z|<t}}\frac{|\Omega(y,\,y-z)|^2}{|y-z|^{2n-2\rho}}\,
\frac{dydt}{t^{n+2\rho+1}}\r)^{1/2}\,dz \\
&\le \int_B|a(z)| \lf[\int_{\substack{|y-z|>r \\ |x-x_0|\le3|y-x_0|}}\frac{|\Omega(y,\,y-z)|^2}{|y-z|^{2n-2\rho}}
\lf(\int^{|y-x_0|+8r}_{|y-x_0|-2r}\frac{dt}{t^{n+2\rho+1}}\r)\,dy\r]^{1/2}\,dz \\
&\ls \int_B|a(z)| \lf(\int_{\substack{|y-z|>r \\ |x-x_0|\le3|y-x_0|}}\frac{|\Omega(y,\,y-z)|^2}{|y-z|^{2n-2\rho}}
\frac{r}{|y-x_0|^{n+2\rho+1}}\,dy\r)^{1/2}\,dz \\
&\ls \int_B|a(z)| \lf(\int_{\substack{|y-z|>r \\ |x-x_0|\le3|y-x_0|}}\frac{|\Omega(y,\,y-z)|^2}{|y-z|^{n-2\bz+1}}
\frac{r}{|x-x_0|^{2n+2\bz}}\,dy\r)^{1/2}\,dz \\
&\ls \frac{r^{1/2}}{|x-x_0|^{n+\bz}} \int_B |a(z)|\lf(\int_{|y|>r}\frac{|\Omega(z+y,\,y)|^2}{|y|^{n-2\bz+1}}\,dy\r)^{1/2}\,dz \\
&\ls \frac{r^{1/2}}{|x-x_0|^{n+\bz}} \int_B |a(z)|\lf(\int_r^\fz\frac{du}{u^{1-2\bz+1}}\r)^{1/2}\,dz \\
&\ls \|a\|_{L^\fz(\rn)}\frac{r^{n+\bz}}{|x-x_0|^{n+\bz}},
\end{align*}
which is also wished.

It remains to estimate ${\rm{I_3}}$.
It is apparent from $t>|y-x_0|+8r$ that $B\subset \{z\in\rn: \ |y-z|<t\}$.
By this, the vanishing moments of atom $a(z)$, and Minkowski's inequality for integrals, we obtain that, for any $x\in (64B)^\complement$,
\begin{align*}
{\rm{I_3}}
&=\lf[\int\int_{\substack{|y-x|<t \\ y\in (16B)^\complement \\ t>|y-x_0|+8r}}\lf|\int_{|y-z|<t}
\lf(\frac{\Omega(y,\,y-z)}{|y-z|^{n-\rho}}-\frac{\Omega(y,\,y-x_0)}{|y-x_0|^{n-\rho}}\r)a(z)\,dz\r|^2\frac{dydt}{t^{n+2\rho+1}}\r]^{1/2} \\
&\le\int_B |a(z)|
\lf(\int\int_{\substack{y\in (16B)^\complement \\ t>\max\{|y-x|,\,|y-x_0|+8r,\,|y-z|\}}}
\lf|\frac{\Omega(y,\,y-z)}{|y-z|^{n-\rho}}-\frac{\Omega(y,\,y-x_0)}{|y-x_0|^{n-\rho}}\r|^2\frac{dydt}{t^{n+2\rho+1}}\r)^{1/2}dz \\
&\le\int_B |a(z)|
\lf(\int\int_{\substack{y\in (16B)^\complement \\ t>\max\{|y-x|,\,|y-x_0|+8r,\,|y-z|\} \\ |x-x_0|\le3|y-x_0|}}
\lf|\frac{\Omega(y,\,y-z)}{|y-z|^{n-\rho}}-\frac{\Omega(y,\,y-x_0)}{|y-x_0|^{n-\rho}}\r|^2\frac{dydt}{t^{n+2\rho+1}}\r)^{1/2}dz \\
&\hs+\int_B |a(z)|
\lf(\int\int_{\substack{y\in (16B)^\complement \\ t>\max\{|y-x|,\,|y-x_0|+8r,\,|y-z|\} \\ |x-x_0|>3|y-x_0|}}
\cdot\cdot\cdot\r)^{1/2}dz=:{\rm{I_{31}}+\rm{I_{32}}}.
\end{align*}

Below, we will give the estimates of ${\rm{I_{31}}}$ and ${\rm{I_{32}}}$, respectively.

For ${\rm{I_{31}}}$, Lemma \ref{l3.6} and the assumption that $\Omega(x,\,z)$ satisfies the $L^{2,\,\az}$-Dini condition
yield that, for any $x\in (64B)^\complement$,
\begin{align*}
{\rm{I_{31}}}
&\le\int_B |a(z)|
\lf[\int_{\substack{y\in (16B)^\complement \\  |x-x_0|\le3|y-x_0|}}
\lf|\frac{\Omega(y,\,y-z)}{|y-z|^{n-\rho}}-\frac{\Omega(y,\,y-x_0)}{|y-x_0|^{n-\rho}}\r|^2\lf(\int^\fz_{|y-x_0|}\frac{dt}{t^{n+2\rho+1}}\r)dy\r]^{1/2}dz \\
&\ls\frac{\|a\|_{L^\fz(\rn)}}{|x-x_0|^{n+\bz}}\int_B
\lf(\int_{\substack{y\in (16B)^\complement \\  |x-x_0|\le3|y-x_0|}}
\lf|\frac{\Omega(y,\,y-z)}{|y-z|^{n-\rho}}-\frac{\Omega(y,\,y-x_0)}{|y-x_0|^{n-\rho}}\r|^2\frac{1}{|y-x_0|^{2\rho-n-2\bz}}dy\r)^{1/2}dz \\
&\ls\frac{\|a\|_{L^\fz(\rn)}}{|x-x_0|^{n+\bz}}\int_B
\lf(\int_{y\in (16B)^\complement }
\lf|\frac{\Omega(y,\,y-z)}{|y-z|^{n-\rho}}-\frac{\Omega(y,\,y-x_0)}{|y-x_0|^{n-\rho}}\r|^2\frac{1}{|y-x_0|^{2\rho-n-2\bz}}dy\r)^{1/2}dz \\
&\ls{\|a\|_{L^\fz(\rn)}}\frac{1}{|x-x_0|^{n+\bz}} \\
&\hs\times\int_B\sum_{j=4}^\fz\lf(\int_{2^j\le|y-x_0|<2^{j+1}r}
\lf|\frac{\Omega(y,\,y-z)}{|y-z|^{n-\rho}}-\frac{\Omega(y,\,y-x_0)}{|y-x_0|^{n-\rho}}\r|^2\frac{1}{|y-x_0|^{2\rho-n-2\bz}}dy\r)^{1/2}dz \\
&\sim{\|a\|_{L^\fz(\rn)}}\frac{1}{|x-x_0|^{n+\bz}} \\
&\hs\times\int_B\sum_{j=4}^\fz\frac{1}{(2^jr)^{\rho-n/2-\bz}}\lf(\int_{2^j\le|y-x_0|<2^{j+1}r}
\lf|\frac{\Omega(y,\,y-z)}{|y-z|^{n-\rho}}-\frac{\Omega(y,\,y-x_0)}{|y-x_0|^{n-\rho}}\r|^2dy\r)^{1/2}dz \\
&\ls{\|a\|_{L^\fz(\rn)}}\frac{1}{|x-x_0|^{n+\bz}}
\int_B\sum_{j=4}^\fz\frac{(2^jr)^{\rho-n/2}}{(2^jr)^{\rho-n/2-\bz}}\lf(\frac{|z-x_0|}{2^jr}
 +\int_{\frac{2|z-x_0|}{2^jr}}^\frac{4|z-x_0|}{2^jr}\frac{\omega_2(\delta)}{\delta}\,d\delta\r)dz \\
&\ls{\|a\|_{L^\fz(\rn)}}\frac{1}{|x-x_0|^{n+\bz}}
\int_B\sum_{j=4}^\fz\frac{(2^jr)^{\rho-n/2}}{(2^jr)^{\rho-n/2-\bz}}\lf[\frac{|z-x_0|}{2^jr}
 +\lf(\frac{|z-x_0|}{2^jr}\r)^\az\r]dz \\
&\ls{\|a\|_{L^\fz(\rn)}}\frac{1}{|x-x_0|^{n+\bz}}
\int_B\sum_{j=4}^\fz{(2^jr)^{\bz}}2^{-j\az}dz
\sim\|a\|_{L^\fz(\rn)}\frac{r^{n+\bz}}{|x-x_0|^{n+\bz}},
\end{align*}
where the last ``$\sim$" is due to $\bz<\az$.

For ${\rm{I_{32}}}$, noticing that $t>\max\{|y-x|,\,|y-x_0|+8r,\,|y-z|\}$ and $|x-x_0|>3|y-x_0|$, we see that
$$t>|y-x|\ge|x-x_0|-|y-x_0|>|x-x_0|/2.$$
From this, $\bz<\rho-n/2$ and the argument same as in ${\rm{I_{31}}}$, it follows that, for any $x\in (64B)^\complement$,
\begin{align*}
{\rm{I_{32}}}
&\le\int_B |a(z)|
\lf[\int_{y\in (16B)^\complement}
\lf|\frac{\Omega(y,\,y-z)}{|y-z|^{n-\rho}}-\frac{\Omega(y,\,y-x_0)}{|y-x_0|^{n-\rho}}\r|^2\lf(\int_{\substack{t>|y-x_0| \\ t>|x-x_0|/2}}\frac{dt}{t^{n+2\rho+1}}\r)dy\r]^{1/2}dz \\
&\ls\frac{\|a\|_{L^\fz(\rn)}}{|x-x_0|^{n+\bz}}\int_B
\lf[\int_{y\in (16B)^\complement}
\lf|\frac{\Omega(y,\,y-z)}{|y-z|^{n-\rho}}-\frac{\Omega(y,\,y-x_0)}{|y-x_0|^{n-\rho}}\r|^2\lf(\int_{|y-x_0|}^\fz\frac{dt}{t^{2\rho-n-2\bz+1}}\r)dy\r]^{1/2}dz \\
&\sim\frac{\|a\|_{L^\fz(\rn)}}{|x-x_0|^{n+\bz}}\int_B
\lf(\int_{y\in (16B)^\complement }
\lf|\frac{\Omega(y,\,y-z)}{|y-z|^{n-\rho}}-\frac{\Omega(y,\,y-x_0)}{|y-x_0|^{n-\rho}}\r|^2\frac{1}{|y-x_0|^{2\rho-n-2\bz}}dy\r)^{1/2}dz \\
&\ls\|a\|_{L^\fz(\rn)}\frac{r^{n+\bz}}{|x-x_0|^{n+\bz}}.
\end{align*}

Collecting the estimates of ${\rm{I_1}}$, ${\rm{I_2}}$, ${\rm{I_{31}}}$ and ${\rm{I_{32}}}$, we obtain the desired inequality.
This finishes the proof of Lemma \ref{yl.1}.
\end{proof}

\begin{proof}[Proof of Theorem \ref{dl.1}]
By the atomic decomposition theory of Hardy space (see \cite[Chapter 2]{l95}),
our problem reduces to prove that there exists a positive constant $C$ such that,
for any $(p,\,\fz,\,s)$-atom $a(x)$ associated with some ball $B:=B(x_0,\,r)$, $\|\mu_{\Omega,\,S}^\rho(a)\|_{L^p(\rn)}\le C$.
To this end, we estimate $\mu_{\Omega,\,S}^\rho(a)$ separately around and away from the support of atom $a(x)$.
More precisely, H\"{o}lder's inequality, the $L^4$ boundedness of $\mu_{\Omega,\,S}^\rho$ (see Theorem A), Lemma \ref{yl.1} and $p>n/(n+\bz)$ yield that
\begin{align*}
\int_{\rn}\lf|\mu_{\Omega,\,S}^\rho(a)(x)\r|^p\,dx
&=\int_{64B}\lf|\mu_{\Omega,\,S}^\rho(a)(x)\r|^p\,dx+\int_{(64B)^\complement}\lf|\mu_{\Omega,\,S}^\rho(a)(x)\r|^p\,dx \\
&\le\lf(\int_{64B}\lf|\mu_{\Omega,\,S}^\rho(a)(x)\r|^4\,dx\r)^{p/4}|64B|^{1-p/4}+\int_{(64B)^\complement}\lf|\mu_{\Omega,\,S}^\rho(a)(x)\r|^p\,dx \\
&\ls\|a\|_{L^\fz(\rn)}^p|B|+\|a\|^p_{L^\fz(\rn)}\int_{(64B)^\complement}\frac{r^{(n+\bz)p}}{|x-x_0|^{(n+\bz)p}}\,dx\ls1.
\end{align*}

The proof is completed.
\end{proof}

\begin{proof}[Proof of Theorem \ref{dl.2}]
Proceeding as in the proof of \cite[Theorem 1]{dlx07a},
it is quite believable that \cite[Theorem 1]{dlx07a} may also be true for the variable kernel case,
but to limit the length of this paper, we leave the details to the interested reader.
\end{proof}

To show Theorem \ref{dl.3}, we need the following atomic decomposition theory of weak Hardy space.

\begin{lemma}\label{whp}{\rm{(\cite{l95})}}
Let $0<p\le1$. For every $f\in WH^p(\rn)$,
there exists a sequence of bounded measurable functions $\{f_k\}_{k=-\fz}^\fz$ such that
\begin{enumerate}
\item [\rm{(i)}] $f=\sum_{k=-\fz}^\fz f_k$ in the sense of distributions.

\item [\rm{(ii)}] Each $f_k$ can be further decomposed into $f_k=\sum_i b^k_i$ and $\{b^k_i\}$ satisfies

 {\rm{\quad(a)}} $\supp{(b^k_i)}\subset B^k_i:=B(x^k_i,\,r^k_i)$;
Moreover, $\sum_i \chi_{B^k_i}(x)\le C$ and $\sum_i |B^k_i|\le c\,2^{-kp}$, where $c\sim\|f\|^p_{WH^p(\rn)}$;

 {\rm{\quad(b)}} $\|b^k_i\|_{L^\fz(\rn)}\le C2^k$, where $C$ is independent of $k$ and $i$;

 {\rm{\quad(c)}} $\int_\rn b^k_i(x)x^\gz\,dx=0$ for any multi-index $\gz$ with $|\gz|\leq \lfloor n(1/p-1)\rfloor$.
\end{enumerate}

Conversely, if distribution $f$ has a decomposition satisfying $\mathrm{(i)}$ and $\mathrm{(ii)}$, then $f\in WH^p(\rn)$.
Moreover, we have $\|f\|^p_{WH^p(\rn)}\sim c$.
\end{lemma}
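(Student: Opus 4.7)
The plan is to construct the decomposition by applying a Calder\'on--Zygmund-type procedure simultaneously at every dyadic height $2^k$, leveraging the grand maximal function characterization of $WH^p(\rn)$. Fix a Schwartz function $\vz$ with $\int_\rn\vz\neq0$ and let $M$ denote the associated grand maximal function so that $\|f\|_{WH^p(\rn)}\sim\|Mf\|_{WL^p(\rn)}$. For each $k\in\zz$ set $\Omega_k:=\{x\in\rn:\,Mf(x)>2^k\}$; by lower semicontinuity $\Omega_k$ is open, and the weak-type bound gives $|\Omega_k|\ls 2^{-kp}\|f\|_{WH^p(\rn)}^p$, which supplies the mass budget for the balls at level $k$.

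The order of execution is as follows. First, perform a Whitney decomposition $\Omega_k=\bigcup_i Q^k_i$ satisfying $\diam(Q^k_i)\sim\dist(Q^k_i,\,\Omega_k^\complement)$ with bounded overlap of the doubles; pass to inscribed balls $B^k_i:=B(x^k_i,\,r^k_i)$, which immediately yields (a) together with $\sum_i|B^k_i|\ls|\Omega_k|\ls 2^{-kp}\|f\|_{WH^p(\rn)}^p$. Second, choose a smooth partition of unity $\{\zeta^k_i\}$ subordinate to $\{\tilde B^k_i\}$ with $\sum_i\zeta^k_i=\chi_{\Omega_k}$ and derivative bounds $\|\nabla^\ell\zeta^k_i\|_\fz\ls(r^k_i)^{-\ell}$ for $0\le\ell\le s:=\lfloor n(1/p-1)\rfloor$. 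Third, replace the local mean of $f$ against $\zeta^k_i$ by its $L^2(\zeta^k_i\,dx)$-projection $P^k_i$ onto polynomials of degree at most $s$, and define the good part $g_k:=f\chi_{\Omega_k^\complement}+\sum_i P^k_i\zeta^k_i$; then $b^k_i:=(f-P^k_i)\zeta^k_i$ has vanishing moments up to order $s$, giving (c). Fourth, set $f_k:=g_{k+1}-g_k$ and regroup the fine-scale pieces from level $k+1$ into the coarser Whitney balls at level $k$ so that $f_k=\sum_i b^k_i$; finally, verify $f=\sum_k f_k$ in $\csz$ by establishing $g_k\to 0$ in $\csz$ as $k\to-\fz$ and $g_k\to f$ in $\csz$ as $k\to+\fz$.

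The main obstacle is the pointwise bound (b): one must show $\|b^k_i\|_{L^\fz(\rn)}\ls 2^k$ even though $b^k_i$ carries a polynomial correction and absorbs contributions from the next finer level. The key input is that each $Q^k_i$ is Whitney in $\Omega_k$, so there exists $y^k_i\in\Omega_k^\complement$ with $\dist(y^k_i,\,B^k_i)\sim r^k_i$ and hence $Mf(y^k_i)\le 2^k$; testing against a Schwartz bump of scale $r^k_i$ centered at $y^k_i$ yields pointwise control of both $f\zeta^k_i$ and the projection $P^k_i$ on $B^k_i$ by a multiple of $2^k$, and the bounded overlap of $\{\tilde B^k_i\}$ closes the estimate. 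The same testing argument gives $\|g_k\|_\fz\ls 2^k$, which simultaneously implies $g_k\to0$ in $\csz$ as $k\to-\fz$ and is the most delicate technical point of the whole construction.

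For the converse, assume the representation (i)--(ii). At height $2^k$ split $f=\sum_{j<k}f_j+\sum_{j\ge k}f_j$: by (b) the first sum has $L^\fz$-norm $\ls 2^k$, so its maximal function is pointwise $\ls 2^k$; the second sum is supported, outside a set of measure $\ls\sum_{j\ge k}2^{-jp}\ls 2^{-kp}c$, on $\bigcup_{j\ge k,\,i}2B^j_i$, and on the complement its maximal function is controlled by summing standard $H^p$-atomic decay estimates of the form $Mb^j_i(x)\ls 2^j(r^j_i/|x-x^j_i|)^{n+s+1}$. Combining these two contributions yields $|\{Mf>C2^k\}|\ls 2^{-kp}c$, i.e.\ $f\in WH^p(\rn)$ with $\|f\|_{WH^p(\rn)}^p\ls c$, and the reverse inequality $c\ls\|f\|_{WH^p(\rn)}^p$ follows directly from the measure estimate built into the Whitney step above.
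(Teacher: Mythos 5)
This lemma is not proved in the paper at all: it is quoted verbatim from Lu's book \cite{l95} (the decomposition is the classical weak-$H^p$ atomic decomposition going back to Fefferman--Soria and to Liu), so there is no in-paper argument to compare against; I can only assess your sketch on its own terms. Your outline reproduces the standard construction --- Calder\'on--Zygmund decomposition at every height $2^k$ of the grand maximal function, Whitney cubes, partitions of unity, polynomial projections, $f_k=g_{k+1}-g_k$ --- and the difficulties you single out (the $L^\fz$ bound (b) and the two-sided convergence of $g_k$) are indeed the right ones, as is the converse argument. Two places where the sketch is materially thinner than the actual proof. First, as literally written your $b^k_i:=(f-P^k_i)\zeta^k_i$ satisfies $\sum_i b^k_i=f-g_k$, not $g_{k+1}-g_k=f_k$; the ``regroup the fine-scale pieces'' step is where most of the work lives. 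One must write $f_k=\sum_i(f-P^k_i)\zeta^k_i-\sum_{i,j}(f-P^{k+1}_j)\zeta^{k+1}_j\zeta^k_i$ and then, for each pair $(i,j)$, add and subtract the projection of $(f-P^{k+1}_j)\zeta^k_i$ with respect to the measure $\zeta^{k+1}_j\,dx$; these correction polynomials telescope to zero when summed over $i$ because $\sum_i\zeta^k_i=1$ on $\Omega_{k+1}\subset\Omega_k$, and only after this redistribution does each summand simultaneously satisfy the support condition (a), the moment condition (c), and the uniform bound (b). Second, a partition of unity subordinate to Whitney cubes is supported on dilates, not on inscribed balls, so $B^k_i$ must be taken as a fixed dilate of the circumscribed ball (harmless for the measure estimates, but needed for $\supp(b^k_i)\subset B^k_i$). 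In the converse direction you should also make explicit that the tail estimate is closed by integrating $(Mb^j_i)^p$ over $(2B^j_i)^\complement$ and applying Chebyshev, which is precisely where the hypothesis $s\ge\lfloor n(1/p-1)\rfloor$ is used. With these points filled in, your argument is the standard proof of the cited lemma.
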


\begin{proof}[Proof of Theorem \ref{dl.3}]
To show Theorem \ref{dl.3}, it suffices to prove that
there exist a positive constant $C$ such that, for any $f\in WH^p(\rn)$ and $\lz\in(0,\,\fz)$,
\begin{align*}
\lf|\lf\{x\in\rn: \ \mu_{\Omega,\,S}^\rho(f)(x)>\lz\r\}\r|\le C{\lz^{-p}}{\|f\|_{WH^p(\rn)}^p}.
\end{align*}
To this end, we choose integer $k_0$ satisfying $2^{k_0}\le\lz<2^{k_0+1}$.
By Lemma \ref{whp}, we may write
\begin{align*}
f=\sum_{k=-\fz}^{k_0}\sum_{i}b^k_i+\sum_{k=k_0+1}^{\fz}\sum_i b^k_i=:F_1+F_2,
\end{align*}
where $b^k_i$ satisfies (a), (b) and (c) of Lemma \ref{whp}.

We estimate $F_1$ first. For $F_1$, we claim that $\|F_1\|_{L^4(\rn)}\ls\lz^{1-{p}/{4}}\|f\|^{{p}/{4}}_{WH^p(\rn)}$.
In fact, Minkowski's inequality and the finite overlapped property of $\{B^k_i\}$ yield that
\begin{align*}
\|F_1\|_{L^4(\rn)}
&\le\sum_{k=-\fz}^{k_0}\sum_{i}\lf\|b^k_i\r\|_{L^4(\rn)}
\le\sum_{k=-\fz}^{k_0}\sum_{i}\lf\|b^k_i\r\|_{L^\fz(\rn)}\lf|B^k_i\r|^{1/4} \\
&\ls\sum_{k=-\fz}^{k_0}2^k\lf(\sum_{i}\lf|B^k_i\r|\r)^{1/4}\ls\sum_{k=-\fz}^{k_0}2^{k(1-{p}/{4})}\|f\|_{WH^p(\rn)}^{{p}/{4}}
\sim\lz^{(1-{p}/{4})}\|f\|_{WH^p(\rn)}^{{p}/{4}}.
\end{align*}
From the $L^4$ boundedness of $\mu_{\Omega,\,S}^\rho$ (see Theorem A) and the above claim, we deduce that
\begin{align*}
\lf|\lf\{x\in\rn: \ \mu_{\Omega,\,S}^\rho(F_1)(x)>\lz\r\}\r|
&\le \lz^{-4}\lf\|\mu_{\Omega,\,S}^\rho(F_1)\r\|^4_{L^4(\rn)} \\
&\ls \lz^{-4}\lf\|F_1\r\|^4_{L^4(\rn)}\ls\lz^{-p}\|f\|_{WH^p(\rn)}^p.
\end{align*}

Next let us deal with $F_2$. Set
\begin{align*}
A_{k_0}:=\bigcup_{k=k_0+1}^\fz\bigcup_i\tilde{B_i^k}\, ,
\end{align*}
where
$\tilde{B_i^k}:=B(x^k_i,\,64(3/2)^{{(k-k_0)p}/{n}}\,r^k_i)$.
To show that
\begin{align*}
\lf|\lf\{x\in\rn: \ \mu_{\Omega,\,S}^\rho(F_2)(x)>\lz\r\}\r|\ls{\lz^{-p}}{\|f\|_{WH^p(\rn)}^p},
\end{align*}
we cut $\{x\in\rn: \ \mu_{\Omega,\,S}^\rho(F_2)(x)>\lz\}$ into $A_{k_0}$ and
$\{x\in (A_{k_0})^\complement: \ \mu_{\Omega,\,S}^\rho(F_2)(x)>\lz\}$.

For $A_{k_0}$, a routine computation gives rise to
\begin{align*}
|A_{k_0}|
&\le \sum_{k=k_0+1}^{\fz}\sum_i\lf|\tilde{B_i^k}\r|\sim\sum_{k=k_0+1}^{\fz}\sum_i\lf(\frac{3}{2}\r)^{(k-k_0)p}\lf|{B_i^k}\r| \\
&\ls\sum_{k=k_0+1}^{\fz}\lf(\frac{3}{2}\r)^{(k-k_0)p}2^{-kp}\|f\|^p_{WH^p(\rn)}\sim\lz^{-p}\|f\|_{WH^p(\rn)}^p.
\end{align*}

It remains to estimate $(A_{k_0})^\complement$.
Applying the inequality $\|\cdot\|_{\ell^1}\le\|\cdot\|_{\ell^p}$ with $p\in(0,\,1]$, and Lemma \ref{yl.1}, we conclude that
\begin{align*}
&\lz^p\lf|\lf\{x\in{\lf(A_{k_0}\r)^\complement}: \ \mu_{\Omega,\,S}^\rho(F_2)(x)>\lz\r\}\r| \\
&\hs\le \int_{(A_{k_0})^\complement}\lf|\mu_{\Omega,\,S}^\rho(F_2)(x)\r|^p\,dx \\
&\hs\le \int_{(A_{k_0})^\complement}\sum_{k=k_0+1}^{\fz}\sum_i\lf|\mu_{\Omega,\,S}^\rho(b^k_i)(x)\r|^p\,dx \\
&\hs\le \sum_{k=k_0+1}^{\fz}\sum_i\int_{({\tilde{B_i^k}})^\complement}\lf|\mu_{\Omega,\,S}^\rho(b^k_i)(x)\r|^p\,dx \\
&\hs\ls \sum_{k=k_0+1}^{\fz}\sum_i\int_{({\tilde{B_i^k}})^\complement}
 \lf\|b^k_i\r\|^p_{L^\fz(\rn)}\frac{(r^k_i)^{(n+\bz)p}}{|x-x^k_i|^{(n+\bz)p}}\,dx \\
&\hs\ls \sum_{k=k_0+1}^{\fz}\sum_i2^{kp}\int_{|x-x^k_i|>(3/2)^{\frac{(k-k_0)p}{n}}r^k_i}
 \frac{(r^k_i)^{(n+\bz)p}}{|x-x^k_i|^{(n+\bz)p}}\,dx \\
&\hs\sim\sum_{k=k_0+1}^{\fz}\sum_i 2^{kp}\lf|B^k_i\r|\lf(\frac{2}{3}\r)^{\frac{p(np+\bz p-n)}{n}(k-k_0)} \\
&\hs\ls\|f\|^p_{WH^p(\rn)}\sum_{k=k_0+1}^{\fz}\lf(\frac{2}{3}\r)^{\frac{p(np+\bz p-n)}{n}(k-k_0)}\\
&\hs\sim\|f\|^p_{WH^p(\rn)},
\end{align*}
where the last ``$\sim$" is due to $p>n/(n+\bz)$. The proof is completed.
%
\end{proof}

\section{Boundedness of $\mu_{\Omega,\,\lz}^{\rho,\,\ast}$ on $H^p(\rn)$ and $WH^p(\rn)$}\label{s3}
The main results of this section are as follows.
\begin{theorem}\label{dl.4}
Let $0<\az\le1$, $n/2<\rho<n$, $2<\lz<\fz$, $0<\bz<\min\{1/2,\,\az,\,\rho-n/2,\,(\lz-2)n/3\}$ and $n/(n+\bz)<p<1$.
Suppose $\Omega(x,\,z)$ satisfies the $L^{2,\,\az}$-Dini condition or the Lipschitz condition of order $\az$.
Then $\mu_{\Omega,\,S}^\rho$ is bounded from $H^p(\rn)$ to $L^p(\rn)$.
\end{theorem}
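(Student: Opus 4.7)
The plan is to mirror the proof of Theorem \ref{dl.1} as closely as possible. By the atomic decomposition of $H^p(\rn)$ it suffices to show $\|\mu^{\rho,\,\ast}_{\Omega,\,\lz}(a)\|_{L^p(\rn)} \ls 1$ uniformly in every $(p,\fz,s)$-atom $a$ supported in some ball $B := B(x_0,r)$. Splitting $\rn = 64B \cup (64B)^\complement$, H\"older's inequality together with the $L^4$ boundedness of $\mu^{\rho,\,\ast}_{\Omega,\,\lz}$ from Theorem A handles the integral over $64B$. The whole task therefore reduces to proving a pointwise analogue of Lemma \ref{yl.1}, namely
\begin{align*}
\mu^{\rho,\,\ast}_{\Omega,\,\lz}(a)(x) \ls \|a\|_{L^\fz(\rn)}\,\frac{r^{n+\bz}}{|x-x_0|^{n+\bz}}, \qquad x \in (64B)^\complement,
\end{align*}
because then $p > n/(n+\bz)$ makes the tail integrable and the proof closes exactly as in Theorem \ref{dl.1}.

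To establish this bound I would split $\rn\times(0,\fz)$ into the standard cone $E_0 := \{(y,t): |y-x| < t\}$ and the annular pieces $E_k := \{(y,t): 2^{k-1}t \le |y-x| < 2^k t\}$ for $k \ge 1$. On $E_k$ one has $(t/(t+|y-x|))^{\lz n} \sim 2^{-k\lz n}$, so that
\begin{align*}
\mu^{\rho,\,\ast}_{\Omega,\,\lz}(a)(x) \ls \sum_{k=0}^\fz 2^{-k\lz n/2}\, \cq_k(a)(x),
\end{align*}
where $\cq_k(a)(x)$ denotes the square function of $\mu^\rho_{\Omega,\,S}$-type but with the cone $|y-x|<t$ replaced by the enlarged cone $|y-x|<2^k t$. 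The case $k=0$ is just $\mu^\rho_{\Omega,\,S}(a)(x)$, already handled by Lemma \ref{yl.1}. For each $k \ge 1$ I would rerun the three-piece decomposition $\mathrm{I}_1 + \mathrm{I}_2 + \mathrm{I}_3$ of Lemma \ref{yl.1} (splitting $y$ into $16B$, then $(16B)^\complement \cap \{t \le |y-x_0|+8r\}$, then $(16B)^\complement \cap \{t > |y-x_0|+8r\}$, with $\mathrm{I}_3$ further split by $|x-x_0| \le 3|y-x_0|$ and its complement). Each step carries over verbatim, except that enlarging the cone weakens the constraints $t > |x-x_0|/2$ and $|x-x_0| \le 3|y-x_0|$ to $t > |x-x_0|/(2\cdot 2^k)$ and $|x-x_0| \ls 2^k |y-x_0|$, at the cost of a polynomial-in-$2^k$ factor in each piece. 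Combined with Lemma \ref{l3.6} and the $L^{2,\,\az}$-Dini (or Lipschitz) hypothesis, this produces $\cq_k(a)(x) \ls (2^k)^\gz \|a\|_{L^\fz(\rn)}\,r^{n+\bz}/|x-x_0|^{n+\bz}$ for some explicit exponent $\gz = \gz(n,\rho,\bz)$.

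The main obstacle is a careful bookkeeping of these polynomial losses across $\mathrm{I}_1$, $\mathrm{I}_2$, $\mathrm{I}_{31}$ and $\mathrm{I}_{32}$, so as to identify the worst $\gz$ and enforce summability of $\sum_{k\ge 0} 2^{-k\lz n/2}(2^k)^\gz$. A straightforward tally — in which $\mathrm{I}_2$ and $\mathrm{I}_{31}$ are the most delicate, because they involve replacing $|y-x_0|$ by $|x-x_0|/2^k$ inside power-law denominators and then summing a dyadic series in $|y-x_0|$ — yields geometric convergence exactly when $\bz < (\lz-2)n/3$, which is precisely the stated hypothesis. Once this summability is secured, the desired pointwise bound on $(64B)^\complement$ follows and the proof concludes as in Theorem \ref{dl.1}.
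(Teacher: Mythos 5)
Your overall strategy is correct and reaches the same reduction as the paper: atomic decomposition, H\"older plus the $L^4$ bound of Theorem A on $64B$, and a pointwise bound $\mu^{\rho,\,\ast}_{\Omega,\,\lz}(a)(x)\ls\|a\|_{L^\fz(\rn)}\,r^{n+\bz}|x-x_0|^{-(n+\bz)}$ on $(64B)^\complement$ (you correctly read the theorem as concerning $\mu^{\rho,\,\ast}_{\Omega,\,\lz}$; the occurrence of $\mu^{\rho}_{\Omega,\,S}$ in the statement is evidently a typo, as the paper's own proof invokes Lemma \ref{yl.2}). Where you genuinely diverge is in how that pointwise bound is proved. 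The paper (Lemma \ref{yl.2}) splits $\rr^{n+1}_+$ into the cone $\{|y-x|<t\}$, where the weight is dropped and Lemma \ref{yl.1} applies, and its complement $\{|y-x|\ge t\}$, which it attacks directly with the same three-way spatial decomposition, using $(t/(t+|x-y|))^{\lz n}\le(t/(t+|x-y|))^{2n+3\bz}$ together with $t+|x-y|\gtrsim|x-x_0|$ to extract the decay $|x-x_0|^{-(2n+2\bz)}$ while keeping the $t$-integral convergent near $t=0$. You instead decompose dyadically in the aperture, dominating $\mu^{\rho,\,\ast}_{\Omega,\,\lz}(a)$ by $\sum_{k\ge0}2^{-k\lz n/2}\cq_k(a)$ with $\cq_k$ the area integral over the enlarged cone $|y-x|<2^kt$, and track polynomial losses in $2^k$; both routes are standard and both work here. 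Two caveats on yours. First, the carry-over is not quite ``verbatim'' in the piece $y\in16B$: on $E_k$ one has $t\sim2^{-k}|x-x_0|$, and the naive estimate there produces a loss of order $2^{k(n/2+\rho)}$, which is \emph{not} summable against $2^{-k\lz n/2}$ when $\lz$ is close to $2$ and $\rho$ close to $n$; to obtain the correct loss $2^{k(n+\bz)}$ you must exploit the relation between $2^k$ and $|x-x_0|/r$ forced by $t\sim2^{-k}|x-x_0|$ and $|y-z|<\min\{t,\,32r\}$, treating the regimes $t\ge32r$ and $t<32r$ separately. Second, once this is done every piece loses $2^{k(n+\bz)}$, so the summability condition your route actually produces is $\lz n/2>n+\bz$, i.e.\ $\bz<(\lz-2)n/2$, not $(\lz-2)n/3$ as you claim; the $n/3$ in the hypothesis is an artifact of the paper's direct method (its $J_1$ term needs $\lz n\ge2n+3\bz$, the extra $\bz$ paying for the convergence of $\int_0 t^{\bz-1}\,dt$). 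Since $(\lz-2)n/3<(\lz-2)n/2$, your argument still closes under the stated hypothesis, and in fact proves slightly more.
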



\begin{theorem}\label{dl.5}
Let $n/2<\rho<n$, $2<\lz<\fz$ and $\Omega(x,\,z)\in L^\fz(\rn) \times L^2(S^{n-1})$.
If
\begin{align*}
\int_0^1\frac{\omega_2(\delta)}{\delta}(1+|\log{\delta}|)^\sz\,d\delta<\fz \ \mathrm{for \ some} \ \sz>1,
\end{align*}
then $\mu^{\rho,\,\ast}_{\Omega,\,\lz}$ is bounded from $H^1(\rn)$ to $L^1(\rn)$.
\end{theorem}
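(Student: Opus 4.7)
The plan is to mirror the strategy of Theorem \ref{dl.1} via the atomic decomposition of $H^1(\rn)$, while adapting the pointwise estimates of Lemma \ref{yl.1} to handle the extra weight $(t/(t+|x-y|))^{\lz n}$ present in $\mu^{\rho,\,\ast}_{\Omega,\,\lz}$ and the weaker cancellation available when $p=1$. It suffices to produce a positive constant $C$ such that $\|\mu^{\rho,\,\ast}_{\Omega,\,\lz}(a)\|_{L^1(\rn)}\le C$ for every $(1,\fz,0)$-atom $a$ associated with $B:=B(x_0,r)$. First I split $\rn=64B\cup(64B)^\complement$; on $64B$, H\"older's inequality together with the $L^2$ boundedness of $\mu^{\rho,\,\ast}_{\Omega,\,\lz}$ provided by Theorem A yields $\int_{64B}\mu^{\rho,\,\ast}_{\Omega,\,\lz}(a)\,dx\ls|64B|^{1/2}\|a\|_{L^2(\rn)}\ls 1$.

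The substance of the proof lies on $(64B)^\complement$. Here I dyadically decompose the upper half-space according to $|x-y|/t$: set $D_{-1}:=\{(y,t):|x-y|\le t\}$ and, for $k\ge 0$, $D_k:=\{(y,t):2^kt\le|x-y|<2^{k+1}t\}$. On $D_k$ the weight $(t/(t+|x-y|))^{\lz n}$ contributes a factor $\ls 2^{-k\lz n}$, and the remaining piece is bounded by a $\mu^{\rho}_{\Omega,\,S}$-type integral on the enlarged cone $|x-y|<2^{k+1}t$. Following the segmentation ${\rm I}_1$--${\rm I}_3$ of Lemma \ref{yl.1} with the scale $2^kt$ in place of $t$ throughout the three inequalities analogous to \eqref{q1}--\eqref{q4}, each layer contributes $(r^n/|x-x_0|^n)$ multiplied by the Dini remainder and by a polynomial factor in $2^k$ that is offset by $2^{-k\lz n}$; the assumption $\lz>2$ is exactly what guarantees geometric summability of the resulting $k$-series.

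Since $p=1$ forces only one vanishing moment ($s=0$), the smoothness step yields, for each internal dyadic annulus $2^jr\le|y-x_0|<2^{j+1}r$, a contribution of the form $|z-x_0|/(2^jr)+\int_{2|z-x_0|/(2^jr)}^{4|z-x_0|/(2^jr)}\omega_2(\delta)/\delta\,d\delta$ (cf. Lemma \ref{l3.6}). The first term is summable in $j$, but the second, once summed in $j$ and integrated against $\int_{(64B)^\complement}r^n|x-x_0|^{-n}\,dx$, produces logarithmic powers in $r/|x-x_0|$. These are precisely absorbed by the strengthened hypothesis $\int_0^1(\omega_2(\delta)/\delta)(1+|\log\delta|)^\sz\,d\delta<\fz$ with $\sz>1$; this explains why the plain $L^{2,\az}$-Dini condition used in Theorem \ref{dl.1} is not sufficient here.

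The main obstacle is the bookkeeping required to combine the dyadic decomposition in $k$ (induced by the $g^*_\lz$ weight) with the dyadic decomposition in $j$ (the annular segmentation inherited from Lemma \ref{yl.1}) without allowing either summation to produce an unexpected logarithm. The threshold $\lz>2$ is tight for this interplay: it furnishes enough geometric decay in $k$ to leave only a single logarithmic loss in $j$, which the condition $(1+|\log\delta|)^\sz$ with $\sz>1$ can absorb. Collecting the bounds on $64B$ and $(64B)^\complement$ yields $\|\mu^{\rho,\,\ast}_{\Omega,\,\lz}(a)\|_{L^1(\rn)}\ls 1$ uniformly in $a$, which together with the atomic decomposition of $H^1(\rn)$ proves Theorem \ref{dl.5}.
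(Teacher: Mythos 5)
First, be aware that the paper does not actually prove Theorem \ref{dl.5}: its ``proof'' consists of the single remark that the argument of \cite[Theorem 1.1]{dlx07lp} is ``quite believable'' to carry over to the variable-kernel case, so there is no written argument in the paper to compare yours against. Your plan is the right kind of thing to attempt, and its architecture (atomic decomposition, local part by $L^p$-boundedness, far part by a pointwise estimate off the support, with the $\log^{\sz}$-Dini hypothesis absorbing the logarithmic loss that appears at $p=1$ where no $\bz$-gain is available) is the standard Ding--Lu--Xue route.

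There are, however, two concrete gaps. (1) You invoke ``the $L^2$ boundedness of $\mu^{\rho,\,\ast}_{\Omega,\,\lz}$ provided by Theorem A,'' but Theorem A only records $L^p$ boundedness for $4\le p<\fz$. The fix is harmless (use $p=4$ and H\"older with exponent $4$, exactly as the paper does in the proof of Theorem \ref{dl.1}), but as written the step cites a fact that is not available. (2) More seriously, your claim that on each layer $D_k$ the factor $2^{-k\lz n}$ extracted from the weight ``offsets'' the polynomial growth in $2^k$, and that ``$\lz>2$ is exactly what guarantees geometric summability,'' does not survive inspection on the near-support piece (the analogue of $\mathrm{I_1}$ in Lemma \ref{yl.1}). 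Replacing the aperture $t$ by $2^{k}t$ turns the constraint $t>|x-x_0|/2$ into $t>|x-x_0|/2^{k+2}$, so the $t$-integral $\int dt/t^{n+2\rho+1}$ produces, after the square root, a factor $2^{k(n+2\rho)/2}$ against $2^{-k\lz n/2}$ from the weight; the resulting series in $k$ converges only if $\lz>1+2\rho/n$, and since $\rho$ may be taken arbitrarily close to $n$ this threshold approaches $3$, which the hypothesis $\lz>2$ does not supply. The decisive input on that piece is not the $2^{-k\lz n}$ decay but the geometric fact that $|x-y|\gtrsim|x-x_0|$ there, whence $t/(t+|x-y|)\ls t/|x-x_0|$ and the weight converts directly into decay in $|x-x_0|$ with no $k$-summation at all --- this is precisely how the paper handles the corresponding term $\mathrm{J_1}$ in the proof of Lemma \ref{yl.2}. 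As it stands, your bookkeeping in $k$ is asserted rather than verified, and verified in the form you describe it would fail; likewise the endpoint smoothness estimate, where the $(1+|\log\dz|)^{\sz}$ weight must actually be extracted from the sum over annuli, is only described and not carried out.
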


\begin{theorem}\label{dl.6}
Let $0<\az\le1$, $n/2<\rho<n$, $2<\lz<\fz$, $0<\bz<\min\{1/2,\,\az,\,\rho-n/2,\,(\lz-2)n/3\}$ and $n/(n+\bz)<p\le1$.
Suppose $\Omega(x,\,z)$ satisfies the $L^{2,\,\az}$-Dini condition or the Lipschitz condition of order $\az$.
Then $\mu^{\rho,\,\ast}_{\Omega,\,\lz}$ is bounded from $WH^p(\rn)$ to $WL^p(\rn)$.
\end{theorem}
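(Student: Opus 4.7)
The plan is to mimic the proof of Theorem~\ref{dl.3} step by step, so that the main task reduces to establishing an analogue of Lemma~\ref{yl.1} for $\mu^{\rho,\,\ast}_{\Omega,\,\lz}$. Concretely, I would aim to prove that, for every $(p,\,\fz,\,s)$-atom $a$ associated with a ball $B:=B(x_0,\,r)$ and every $x\in(64B)^\complement$,
\begin{align*}
\mu^{\rho,\,\ast}_{\Omega,\,\lz}(a)(x)\ls \|a\|_{L^\fz(\rn)}\frac{r^{n+\bz}}{|x-x_0|^{n+\bz}}.
\end{align*}
Once this pointwise estimate is in hand, the weak-type $(WH^p,\,WL^p)$ conclusion is obtained by reproducing almost verbatim the Calder\'on--Zygmund splitting of $f\in WH^p(\rn)$ via Lemma~\ref{whp}: decompose $f=F_1+F_2$ at a threshold $\eta>0$ with $2^{k_0}\le\eta<2^{k_0+1}$ (writing $\eta$ in place of $\lz$ to avoid collision with the aperture parameter), introduce the enlarged balls $\wz B^k_i:=B(x^k_i,\,64(3/2)^{(k-k_0)p/n}r^k_i)$, control $F_1$ by Chebyshev's inequality together with the $L^4$-boundedness guaranteed by Theorem~A, and estimate $\mu^{\rho,\,\ast}_{\Omega,\,\lz}(F_2)$ off the enlarged supports via the above pointwise bound.

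To obtain the pointwise estimate, I would decompose the upper half-space according to the relative size of $|x-y|$ and $t$:
\begin{align*}
\rr^{n+1}_+=\{(y,\,t):\ |x-y|<t\}\cup\bigcup_{k=1}^\fz\{(y,\,t):\ 2^{k-1}t\le|x-y|<2^{k}t\}.
\end{align*}
On the first region the Poisson-type weight $(t/(t+|x-y|))^{\lz n}$ is bounded by $1$, so this piece is controlled by $\mu^\rho_{\Omega,\,S}(a)(x)$ and therefore directly by Lemma~\ref{yl.1}. On the $k$-th annular region the weight is comparable to $2^{-k\lz n}$, and the collection of $y$'s is contained in a cone of aperture $2^k$ around $x$. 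Repeating the three-part splitting ${\rm I_1}+{\rm I_2}+{\rm I_3}$ from Lemma~\ref{yl.1}, but with ``$16B$'' replaced by a $2^k$-fold enlargement and with the annular weight in front, and reusing Lemma~\ref{l3.6} together with the $L^{2,\,\az}$-Dini (or Lipschitz of order $\az$) hypothesis, I expect a bound of the form
\begin{align*}
\mu^{\rho,\,\ast}_{\Omega,\,\lz}(a)(x)\ls\sum_{k=0}^\fz 2^{-k\lz n/2}\cdot 2^{3kn/2}\,\|a\|_{L^\fz(\rn)}\,\frac{r^{n+\bz}}{|x-x_0|^{n+\bz}},
\end{align*}
in which the factor $2^{3kn/2}$ reflects the volume of the $2^k$-enlarged regions and the per-scale loss incurred when propagating the atomic cancellation across these larger scales. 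The restriction $\bz<(\lz-2)n/3$ is precisely what makes the resulting geometric series converge.

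The main obstacle is executing the $k$-th annular estimate cleanly. In particular, the smoothness bound behind ${\rm I_{31}}$ in Lemma~\ref{yl.1}, which converts the modulus of continuity of $\Omega$ into the $\bz$-gain, must be propagated without deterioration over balls of radius $\sim 2^k|x-x_0|$, and each doubling of the aperture costs a controlled power of $2^k$ that has to be balanced against $2^{-k\lz n/2}$ from the weight. Once that estimate is in place, the set $\{\mu^{\rho,\,\ast}_{\Omega,\,\lz}(F_1)>\eta\}$ is handled by Chebyshev plus the $L^4$-bound, and $\{\mu^{\rho,\,\ast}_{\Omega,\,\lz}(F_2)>\eta\}$ is split into $A_{k_0}$ (whose measure sums directly as in Theorem~\ref{dl.3}) and $(A_{k_0})^\complement$, on which the pointwise bound together with $p>n/(n+\bz)$ closes the argument exactly as in the last display of the proof of Theorem~\ref{dl.3}.
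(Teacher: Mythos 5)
Your overall architecture is the same as the paper's: reduce everything to a pointwise decay estimate $\mu^{\rho,\,\ast}_{\Omega,\,\lz}(a)(x)\ls\|a\|_{L^\fz(\rn)}r^{n+\bz}|x-x_0|^{-(n+\bz)}$ for atoms and $x\in(64B)^\complement$ (this is exactly Lemma \ref{yl.2}), and then rerun the $F_1+F_2$ splitting of Theorem \ref{dl.3} verbatim. That second half of your argument is correct and needs no changes. The gap is in the first half: you never actually prove the pointwise estimate, and the route you sketch does not close for the full range of $\lz$. You propose the classical dyadic decomposition $2^{k-1}t\le|x-y|<2^kt$ and claim a per-annulus contribution $2^{-k\lz n/2}\cdot2^{3kn/2}\cdot\|a\|_{L^\fz(\rn)}r^{n+\bz}|x-x_0|^{-(n+\bz)}$. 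That series converges only when $\lz>3$, whereas the theorem is asserted for all $\lz>2$; moreover $\bz$ does not appear in your exponent, so your statement that $\bz<(\lz-2)n/3$ ``is precisely what makes the series converge'' is not supported by your own display. For the hypothesis $\bz<(\lz-2)n/3$ to do its job in a dyadic scheme, the loss per annulus (after taking square roots) must be at most $2^{k(n+3\bz/2)}$, and it is not clear the aperture-$2^k$ analogue of ${\rm I_1}$ can be made that efficient: enlarging the aperture pushes the lower limit of the $t$-integral down to $\sim|x-x_0|/2^{k}$, which by itself produces a loss of order $2^{k(n/2+\rho)}$, and $n/2+\rho$ exceeds $n+3\bz/2$ when $\rho$ is close to $n$. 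So the convergence of your geometric series is genuinely in doubt for $2<\lz\le3$, which is part of the stated range.

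The paper's Lemma \ref{yl.2} avoids the dyadic decomposition entirely. It splits $\rr^{n+1}_+$ into $\{|x-y|<t\}$, where the weight is $\le1$ and Lemma \ref{yl.1} applies directly (you have this part), and $\{|x-y|\ge t\}$, which it further splits by the location of $y$ relative to $16B$ and the size of $t$ relative to $|y-x_0|+8r$, exactly as in Lemma \ref{yl.1}. On each piece of $\{|x-y|\ge t\}$ the weight is absorbed by writing
\begin{align*}
\lf(\frac{t}{t+|x-y|}\r)^{\lz n}\le\lf(\frac{t}{t+|x-y|}\r)^{2n+2\bz}\quad\text{or}\quad\lf(\frac{t}{t+|x-y|}\r)^{2n+3\bz}\times(\text{a leftover power of }t),
\end{align*}
which is legitimate because $\lz n>2n+3\bz$, i.e.\ $\bz<(\lz-2)n/3$ --- this is exactly where that hypothesis enters. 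The factor $(t+|x-y|)^{-(2n+2\bz)}\ls|x-x_0|^{-(2n+2\bz)}$ produces the desired decay (using $t+|x-y|\gtrsim|x-x_0|$ on the relevant regions), and the leftover positive power of $t$ makes the $t$-integral converge where the unweighted one would not. If you want to salvage your dyadic approach you would have to carry out the aperture-$2^k$ atom estimate with explicit, sharp dependence on $2^k$ and verify the resulting exponent against $\lz n/2$; as written, the key estimate is asserted rather than proved, and the asserted form is insufficient.
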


%

\begin{lemma}\label{yl.2}
Let $0<\az\le1$, $n/2<\rho<n$, $2<\lz<\fz$ and $0<\bz<\min\{1/2,\,\az,\,\rho-n/2,\,(\lz-2)n/3\}$.
Suppose $\Omega(x,\,z)$ satisfies the $L^{2,\,\az}$-Dini condition or the Lipschitz condition of order $\az$.
If $a(x)$ is a $(p,\,\fz,\,s)$-atom associated with some ball $B:=B(x_0,\,r)$,
then there exists a positive constant $C$ independent of $a(x)$ such that, for any $x\in (64B)^\complement$,
\begin{align*}
\mu^{\rho,\,\ast}_{\Omega,\,\lz}(a)(x)\le C\|a\|_{L^\fz(\rn)}\frac{r^{n+\beta}}{|x-x_0|^{n+\beta}}.
\end{align*}
\end{lemma}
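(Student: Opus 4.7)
The proof will closely parallel that of Lemma \ref{yl.1}; the new ingredient is how to dispose of the factor $(t/(t+|x-y|))^{\lz n}$. The plan is to decompose the half-space according to the position of $y$ relative to $x$. Set $E_0:=\{(y,t):\ |y-x|<t\}$ and $E_k:=\{(y,t):\ 2^{k-1}t\le|y-x|<2^k t\}$ for $k\ge 1$. Then on $E_0$ the factor is at most $1$, while on $E_k$ ($k\ge1$) it is bounded by $2^{-k\lz n}$. Consequently one obtains the pointwise bound
\begin{equation*}
\mu^{\rho,\,\ast}_{\Omega,\,\lz}(a)(x)\le \mu^\rho_{\Omega,\,S}(a)(x)+\sum_{k=1}^\fz 2^{-k\lz n/2}\, M_k(a)(x),
\end{equation*}
where $M_k(a)(x)$ is the obvious analogue of $\mu^\rho_{\Omega,\,S}(a)(x)$ with the cone $|y-x|<t$ replaced by the annulus $2^{k-1}t\le|y-x|<2^k t$. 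The first summand is already controlled by Lemma \ref{yl.1}, so the task reduces to estimating the second.

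For each fixed $k\ge1$ I would bound $M_k(a)(x)$ by imitating the three-part decomposition from the proof of Lemma \ref{yl.1}, writing $M_k(a)(x)\le \mathrm{I}_1^k+\mathrm{I}_2^k+\mathrm{I}_3^k$ according to whether $y\in 16B$, or $y\in(16B)^\complement$ with $t\le|y-x_0|+8r$, or $y\in(16B)^\complement$ with $t>|y-x_0|+8r$. The estimates of $\mathrm{I}_1^k$ and $\mathrm{I}_2^k$ use only the size bound $\Omega(x,z)\in L^\fz(\rn)\times L^2(S^{n-1})$ together with Minkowski's inequality for integrals, whereas the estimate of $\mathrm{I}_3^k$ uses the vanishing moment of $a$ together with Lemma \ref{l3.6} (or the Lipschitz condition). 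The key geometric point is that on the annulus $E_k$ the constraint $|y-x|\sim 2^k t$ forces $t$ to lie in an interval of length $\sim|y-x|/2^k$ rather than a semi-infinite ray, so that each cone-integral in $t$ from the proof of Lemma \ref{yl.1} picks up an extra factor of the form $2^{k\tau_i}$ (with $\tau_i$ depending on the piece).

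The final step is to sum the resulting contributions
\begin{equation*}
\sum_{k=1}^\fz 2^{-k\lz n/2}\cdot 2^{k\tau_i}\cdot \|a\|_{L^\fz(\rn)}\frac{r^{n+\bz}}{|x-x_0|^{n+\bz}}.
\end{equation*}
For those pieces in which $\tau_i<\lz n/2$ the series converges directly. For the borderline pieces one splits the sum at the natural threshold $k_\ast\sim\log_2(|x-x_0|/r)$ and uses the sharper bound on the inner $y$-integration available when $t$ is forced to be small by the annular constraint, because then $|y-z|<t$ becomes binding on the support of $a$ and trims the $L^2$-integral of $\Omega$. Tracking the exponents carefully yields the desired bound, and the tightest of the three constraints on $\bz$ turns out to be exactly $\bz<(\lz-2)n/3$, in addition to the restrictions $\bz<1/2$, $\bz<\az$ and $\bz<\rho-n/2$ inherited from Lemma \ref{yl.1}.

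The main technical obstacle will be obtaining the sharp $k$-dependence in $\mathrm{I}_3^k$, where the smoothness estimate from Lemma \ref{l3.6} and the annular cutoff on $t$ must be combined to produce a bound that sums correctly against $2^{-k\lz n/2}$. This is what ultimately pins down the extra constraint $\bz<(\lz-2)n/3$ in the hypothesis; once it is in place, the conclusion $\mu^{\rho,\,\ast}_{\Omega,\,\lz}(a)(x)\ls\|a\|_{L^\fz(\rn)}r^{n+\bz}/|x-x_0|^{n+\bz}$ follows by the same final aggregation as in Lemma \ref{yl.1}.
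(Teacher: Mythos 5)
Your overall strategy --- the dyadic aperture decomposition $\mu^{\rho,\,\ast}_{\Omega,\,\lz}(a)\le\mu^\rho_{\Omega,\,S}(a)+\sum_{k\ge1}2^{-(k-1)\lz n/2}M_k(a)$ followed by wide-aperture estimates for each $M_k$ --- is a legitimate route and genuinely different from the paper's. The paper does not sum over apertures: it splits $\mathbb{R}^{n+1}_+$ only into $\{|y-x|<t\}$ (where the damping factor is $\le1$ and Lemma \ref{yl.1} applies verbatim) and $\{|y-x|\ge t\}$, and in the latter region it absorbs $(t/(t+|x-y|))^{\lz n}$ pointwise by writing it as $(t/(t+|x-y|))^{2n+2\bz}$ or $(t/(t+|x-y|))^{2n+3\bz}$ times a bounded factor (this is exactly where $\lz n>2n+3\bz$, i.e.\ $\bz<(\lz-2)n/3$, enters), using lower bounds such as $t+|x-y|>|x-x_0|/2$ or $t+|x-y|>|x-x_0|$ on each piece. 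That direct absorption avoids the infinite sum entirely.

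However, as written your proposal has a genuine gap: the entire quantitative core is deferred. You never compute the exponents $\tau_i$, and the summability of $\sum_k2^{-k\lz n/2}2^{k\tau_i}$ is precisely where the argument can fail under the stated hypotheses. Concretely, in the piece $\mathrm{I}_1^k$ (where $y\in16B$), the annular constraint forces $t\sim2^{-k}|x-x_0|$, so the $t$-integral $\int t^{-(n+2\rho+1)}dt$ produces a factor $2^{k(n/2+\rho)}$ after taking square roots; since only $\lz>2$ is assumed and $\rho$ may be close to $n$, one can have $\lz n/2<n/2+\rho$, so the naive sum diverges and the refinement you allude to (using $|y-z|<t$ to replace $r^{\rho-n/2}$ by $t^{\rho-n/2}$ in the inner $L^2$-integral of $\Omega$, plus a split of the $k$-sum at $2^{k_\ast}\sim|x-x_0|/r$) is not optional but essential --- and it is exactly the step you leave undone. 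Relatedly, you attribute the constraint $\bz<(\lz-2)n/3$ to the far piece $\mathrm{I}_3^k$, but in the far region only $\bz\le(\lz-2)n/2$ is needed (there $t+|x-y|>|x-x_0|$ and one spends only $2n+2\bz$ powers); the factor $3\bz$ actually bites in the near region, where one must additionally spend $2\rho-n-2\bz>0$ powers of $t$ against $|y-z|$ to make the singularity $|y-z|^{-(2n-2\rho)}$ integrable. Until these exponents are tracked explicitly, the proposal does not establish that the hypotheses of the lemma suffice.
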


\begin{proof}
We only consider the case $\Omega(x,\,z)$ satisfies the $L^{2,\,\az}$-Dini condition.
In another case, the proof is easier and we leave the details to the interested reader.
By Lemma \ref{yl.1}, we know that, for any $x\in (64B)^\complement$,
\begin{align*}
\mu^{\rho,\,\ast}_{\Omega,\,\lz}(a)(x)
&=\lf[\int\int_{{\mathbb{R}}^{n+1}_+}\lf(\frac{t}{t+|x-y|}\r)^{\lz n}\lf|\int_{|y-z|<t}\frac{\Omega(y,\,y-z)}{|y-z|^{n-\rho}}a(z)\,dz\r|^2\,\frac{dydt}{t^{n+2\rho+1}}\r]^{1/2} \\
&\le\lf[\int\int_{|y-x|<t}\lf(\frac{t}{t+|x-y|}\r)^{\lz n}\lf|\int_{|y-z|<t}\frac{\Omega(y,\,y-z)}{|y-z|^{n-\rho}}a(z)\,dz\r|^2\,\frac{dydt}{t^{n+2\rho+1}}\r]^{1/2} \\
&\hs+\lf[\int\int_{|y-x|\ge t}\lf(\frac{t}{t+|x-y|}\r)^{\lz n}\lf|\int_{|y-z|<t}\frac{\Omega(y,\,y-z)}{|y-z|^{n-\rho}}a(z)\,dz\r|^2\,\frac{dydt}{t^{n+2\rho+1}}\r]^{1/2} \\
&\le\mu^\rho_{\Omega,\,S}(a)(x) \\
&\hs+\lf[\int\int_{|y-x|\ge t}\lf(\frac{t}{t+|x-y|}\r)^{\lz n}\lf|\int_{|y-z|<t}\frac{\Omega(y,\,y-z)}{|y-z|^{n-\rho}}a(z)\,dz\r|^2\,\frac{dydt}{t^{n+2\rho+1}}\r]^{1/2} \\
&\le C\|a\|_{L^\fz(\rn)}\frac{r^{n+\beta}}{|x-x_0|^{n+\beta}} \\
&\hs+\lf[\int\int_{|y-x|\ge t}\lf(\frac{t}{t+|x-y|}\r)^{\lz n}\lf|\int_{|y-z|<t}\frac{\Omega(y,\,y-z)}{|y-z|^{n-\rho}}a(z)\,dz\r|^2\,\frac{dydt}{t^{n+2\rho+1}}\r]^{1/2} \\
&=:C\|a\|_{L^\fz(\rn)}\frac{r^{n+\beta}}{|x-x_0|^{n+\beta}}+{\rm{J}}.
\end{align*}
Thus, to show Lemma \ref{yl.2}, it suffices to prove that, for any $x\in (64B)^\complement$,
$${\rm{J}}\ls \|a\|_{L^\fz(\rn)}\frac{r^{n+\beta}}{|x-x_0|^{n+\beta}}.$$

For any $x\in (64B)^\complement$, write
\begin{align*}
{\rm{J}}
&\le \lf[\int\int_{\substack{|y-x|\ge t \\ y\in 16B}}
\lf(\frac{t}{t+|x-y|}\r)^{\lz n}\lf|\int_{|y-z|<t}\frac{\Omega(y,\,y-z)}{|y-z|^{n-\rho}}a(z)\,dz\r|^2\,\frac{dydt}{t^{n+2\rho+1}}\r]^{1/2} \\
&\hs+\lf[\int\int_{\substack{|y-x|\ge t \\ y\in (16B)^\complement \\ t\le|y-x_0|+8r}}\cdot\cdot\cdot\r]^{1/2}
+\lf[\int\int_{\substack{|y-x|\ge t \\ y\in (16B)^\complement \\ t>|y-x_0|+8r}}\cdot\cdot\cdot\r]^{1/2}=:{\rm{J_1+J_2+J_3}}.
\end{align*}

For ${\rm{J_1}}$, by $x\in (64B)^\complement$, $y\in 16B$ and $z\in B$, we know that
\begin{align*}
|x-x_0|/2<|x-y|<2|x-x_0| \ {\rm{and}} \ |y-z|<32r.
\end{align*}
From this, Minkowski's inequality for integrals, $0<\bz<\min\{\rho-n/2,\,(\lz-2)n/3\}$ and
$\Omega(x,\,z)\in L^\fz(\rn)\times L^2(S^{n-1})$,
it follows that, for any $x\in (64B)^\complement$,
\begin{align*}
{\rm{J_1}}
&\le\lf[\int\int_{\substack{|y-x|\ge t \\  |x-x_0|/2<|x-y|<2|x-x_0|}}\lf(\frac{t}{t+|x-y|}\r)^{\lz n}
\lf|\int_{\substack{|y-z|<t \\ |y-z|<32r}}\frac{\Omega(y,\,y-z)}{|y-z|^{n-\rho}}a(z)\,dz\r|^2\,\frac{dydt}{t^{n+2\rho+1}}\r]^{1/2} \\
&\le \int_B|a(z)|\lf[\int\int_{\substack{2|x-x_0|\ge t  \\ |x-x_0|/2<|x-y|<2|x-x_0| \\ |y-z|<32r,\,|y-z|<t}}
\lf(\frac{t}{t+|x-y|}\r)^{\lz n}\frac{|\Omega(y,\,y-z)|^2}{|y-z|^{2n-2\rho}}\frac{dydt}{t^{n+2\rho+1}}\r]^{1/2}dz \\
&\ls \int_B|a(z)|\lf[\int\int_{\substack{2|x-x_0|\ge t  \\ |y-z|<32r}}\lf(\frac{t}{|y-z|}\r)^{2\rho-n-2\bz}\r. \\
&\lf. \hspace{4.4 cm} \times \lf(\frac{t}{|x-x_0|}\r)^{2n+3\bz}\frac{|\Omega(y,\,y-z)|^2}{|y-z|^{2n-2\rho}}\frac{dydt}{t^{n+2\rho+1}}\r]^{1/2}dz \\
&\ls \int_B|a(z)|\lf[\int_{|y-z|<32r}\frac{|\Omega(y,\,y-z)|^2}{|x-x_0|^{2n+3\bz}|y-z|^{n-2\bz}}\lf(\int_0^{2|x-x_0|}t^{\bz-1}dt\r)dy\r]^{1/2}dz \\
&\sim \int_B|a(z)|\lf(\int_{|y-z|<32r}\frac{|\Omega(y,\,y-z)|^2}{|x-x_0|^{2n+2\bz}|y-z|^{n-2\bz}}dy\r)^{1/2}dz \\
&\ls \frac{1}{|x-x_0|^{n+\bz}}\int_B|a(z)|\lf(\int^{32r}_0\frac{u^{n-1}}{u^{n-2\bz}}\,du\r)^{1/2}dz \\
&\ls \|a\|_{L^\fz(\rn)}\frac{r^{n+\bz}}{|x-x_0|^{n+\bz}},
\end{align*}
which is wished.

For ${\rm{J_2}}$, rewrite
\begin{align*}
{\rm{J_2}}
&\le\lf[\int\int_{\substack{|y-x|\ge t \\ y\in (16B)^\complement \\ t\le|y-x_0|+8r \\ |x-x_0|>3|y-x_0|}}
\lf(\frac{t}{t+|x-y|}\r)^{\lz n}\lf|\int_{|y-z|<t}\frac{\Omega(y,\,y-z)}{|y-z|^{n-\rho}}a(z)\,dz\r|^2\,\frac{dydt}{t^{n+2\rho+1}}\r]^{1/2} \\
&\hs+\lf[\int\int_{\substack{ y\in (16B)^\complement \\ t\le|y-x_0|+8r \\ |x-x_0|\le3|y-x_0|}}
\lf|\int_{|y-z|<t}\frac{\Omega(y,\,y-z)}{|y-z|^{n-\rho}}a(z)\,dz\r|^2\,\frac{dydt}{t^{n+2\rho+1}}\r]^{1/2}=:{\rm{J_{21}+J_{22}}}.
\end{align*}

The estimate of ${\rm{J_{22}}}$ is quite similar to that given earlier for ${\rm{I_{2}}}$ and so is omitted.

We are now turning to the estimate of ${\rm{J_{21}}}$.
By $x\in (64B)^\complement$, $y\in (16B)^\complement$, $z\in B$, $|x-x_0|>3|y-x_0|$ and the mean value theorem, we know that
\begin{align}\label{p1}
r<|y-z|\sim|y-x_0|;
\end{align}
\begin{align}\label{p4}
|x-y|\ge|x-x_0|-|y-x_0|> |x-x_0|/2;
\end{align}
\begin{align}\label{p2}
|y-x_0|-2r\le|y-x_0|-|x_0-z|\le|y-z|<t\le|y-x_0|+8r;
\end{align}
\begin{align}\label{p5}
\lf|\frac{1}{(|y-x_0|-2r)^{2\rho-n-2\bz}}-\frac{1}{(|y-x_0|+8r)^{2\rho-n-2\bz}}\r|\ls\frac{r}{|y-x_0|^{2\rho-n-2\bz+1}}.
\end{align}
From Minkowski's inequality for integrals, \eqref{p1}-\eqref{p5}, $\bz<\min\{1/2,\,(\lz-2)n/3\}<(\lz-2)n/2$
and $\Omega(x,\,z)\in L^\fz(\rn)\times L^2(S^{n-1})$, it follows that, for any $x\in (64B)^\complement$,
\begin{align*}
{\rm{J_{21}}}
&=\lf[\int\int_{\substack{|y-x|\ge t \\ y\in (16B)^\complement \\ t\le|y-x_0|+8r \\ |x-x_0|>3|y-x_0|}}
 \lf(\frac{t}{t+|x-y|}\r)^{\lz n}\lf|\int_{|y-z|<t}\frac{\Omega(y,\,y-z)}{|y-z|^{n-\rho}}a(z)\,dz\r|^2\,\frac{dydt}{t^{n+2\rho+1}}\r]^{1/2} \\
&\le \int_B|a(z)|\lf[\int\int_{\substack{|y-x|\ge t \\ y\in (16B)^\complement \\ t\le|y-x_0|+8r \\ |x-x_0|>3|y-x_0| \\ |y-z|<t }}
 \lf(\frac{t}{t+|x-y|}\r)^{2n+2\bz}\frac{|\Omega(y,\,y-z)|^2}{|y-z|^{2n-2\rho}}\frac{dydt}{t^{n+2\rho+1}}\r]^{1/2}dz \\
&\le \int_B|a(z)|\lf[\int\int_{\substack{ |y-z|>r \\ |x-y|>|x-x_0|/2 \\ |y-x_0|-2r\le t\le|y-x_0|+8r }}
 \lf(\frac{t}{|x-y|}\r)^{2n+2\bz}\frac{|\Omega(y,\,y-z)|^2}{|y-z|^{2n-2\rho}}\frac{dydt}{t^{n+2\rho+1}}\r]^{1/2}dz \\
&\ls \frac{\|a\|_{L^\fz(\rn)}}{|x-x_0|^{n+\bz}}\int_B\lf[\int_{|y-z|>r}
 \frac{|\Omega(y,\,y-z)|^2}{|y-z|^{2n-2\rho}}\lf(\int^{|y-x_0|+8r}_{|y-x_0|-2r}\frac{dt}{t^{2\rho-n-2\bz+1}}\r)dy\r]^{1/2}dz \\
&\ls \frac{\|a\|_{L^\fz(\rn)}}{|x-x_0|^{n+\bz}}\int_B\lf(\int_{|y-z|>r}
 \frac{|\Omega(y,\,y-z)|^2}{|y-z|^{2n-2\rho}}\frac{r}{|y-x_0|^{2\rho-n-2\bz+1}}\,dy\r)^{1/2}dz \\
&\sim {\|a\|_{L^\fz(\rn)}}\frac{r^{1/2}}{|x-x_0|^{n+\bz}}\int_B\lf(\int_{|y-z|>r}
 \frac{|\Omega(y,\,y-z)|^2}{|y-z|^{n-2\bz+1}}\,dy\r)^{1/2}dz \\
&\ls {\|a\|_{L^\fz(\rn)}}\frac{r^{1/2}}{|x-x_0|^{n+\bz}}\int_B\lf(\int_{r}^\fz
 \frac{du}{u^{1-2\bz+1}}\r)^{1/2}dz \\
&\sim {\|a\|_{L^\fz(\rn)}}\frac{r^{n+\bz}}{|x-x_0|^{n+\bz}},
\end{align*}
which is also wished.


For ${\rm{J_3}}$, noticing that $t>|y-x_0|+8r$, we see that, for any $y\in (16B)^\complement$,
\begin{align}\label{p6}
B\subset \{z\in\rn: \ |z-y|<t\};
\end{align}
\begin{align}\label{p7}
t+|x-y|\ge t+|x-x_0|-|y-x_0|\ge |x-x_0|+8r>|x-x_0|.
\end{align}
From \eqref{p6}, the vanishing moments of atom $a(z)$, Minkowski's inequality for integrals, \eqref{p7},
$\bz<\min\{\az,\,\rho-n/2,\,(\lz-2)n/3\}<(\lz-2)n/2$ and the argument same as in ${\rm{I_{31}}}$,
it follows that, for any $x\in (64B)^\complement$,
\begin{align*}
{\rm{J_{3}}}
&= \lf[\int\int_{\substack{|y-x|\ge t \\ y\in (16B)^\complement \\ t>|y-x_0|+8r}}
\lf(\frac{t}{t+|x-y|}\r)^{\lz n}\lf|\int_{|y-z|<t}\frac{\Omega(y,\,y-z)}{|y-z|^{n-\rho}}a(z)\,dz\r|^2\,\frac{dydt}{t^{n+2\rho+1}}\r]^{1/2} \\
&\le\int_B |a(z)|
\lf[\int\int_{\substack{|y-x|\ge t \\ y\in (16B)^\complement \\ t>|y-x_0|+8r \\ |y-z|<t \\ t+|x-y|>|x-x_0|}}
\lf(\frac{t}{t+|x-y|}\r)^{\lz n}
\lf|\frac{\Omega(y,\,y-z)}{|y-z|^{n-\rho}}-\frac{\Omega(y,\,y-x_0)}{|y-x_0|^{n-\rho}}\r|^2\frac{dydt}{t^{n+2\rho+1}}\r]^{1/2}dz \\
&\le\int_B |a(z)|
\lf[\int\int_{\substack{y\in (16B)^\complement \\ t>|y-x_0|  \\ t+|x-y|>|x-x_0|}}
\lf(\frac{t+|x-y|}{|x-x_0|}\r)^{2n+2\bz}\lf(\frac{t}{t+|x-y|}\r)^{\lz n} \r. \\
&\lf. \hspace{5.4 cm}\times\lf|\frac{\Omega(y,\,y-z)}{|y-z|^{n-\rho}}-\frac{\Omega(y,\,y-x_0)}{|y-x_0|^{n-\rho}}\r|^2\frac{dydt}{t^{n+2\rho+1}}\r]^{1/2}dz \\
&\le\frac{1}{|x-x_0|^{n+\bz}}\int_B |a(z)|
\lf[\int\int_{\substack{y\in (16B)^\complement \\ t>|y-x_0|}}
\frac{t^{\lz n}}{(t+|x-y|)^{\lz n-2n-2\bz}} \r. \\
&\lf. \hspace{5.4 cm}\times\lf|\frac{\Omega(y,\,y-z)}{|y-z|^{n-\rho}}-\frac{\Omega(y,\,y-x_0)}{|y-x_0|^{n-\rho}}\r|^2\frac{dydt}{t^{n+2\rho+1}}\r]^{1/2}dz \\
&\le\frac{\|a\|_{L^\fz(\rn)}}{|x-x_0|^{n+\bz}}\int_B
\lf[\int_{{y\in (16B)^\complement}}
\lf|\frac{\Omega(y,\,y-z)}{|y-z|^{n-\rho}}-\frac{\Omega(y,\,y-x_0)}{|y-x_0|^{n-\rho}}\r|^2\lf(\int^\fz_{|y-x_0|}\frac{dt}{t^{2\rho-n-2\bz+1}}\r)dy\r]^{1/2}dz \\
&\sim\frac{\|a\|_{L^\fz(\rn)}}{|x-x_0|^{n+\bz}}\int_B
\lf(\int_{y\in (16B)^\complement }
\lf|\frac{\Omega(y,\,y-z)}{|y-z|^{n-\rho}}-\frac{\Omega(y,\,y-x_0)}{|y-x_0|^{n-\rho}}\r|^2\frac{1}{|y-x_0|^{2\rho-n-2\bz}}dy\r)^{1/2}dz \\
&\ls {\|a\|_{L^\fz(\rn)}}\frac{r^{n+\bz}}{|x-x_0|^{n+\bz}}.
\end{align*}

Combining the estimates of ${\rm{J_1}}$, ${\rm{J_{21}}}$, ${\rm{J_{22}}}$ and ${\rm{J_3}}$, we obtain the desired inequality.
This finishes the proof of Lemma \ref{yl.2}.
\end{proof}

\begin{proof}[Proof of Theorems \ref{dl.4} and \ref{dl.6}]
Once we prove the Lemma \ref{yl.2}, the proofs of Theorems \ref{dl.4} and \ref{dl.6}
are identity to that of Theorems \ref{dl.1} and \ref{dl.3}, respectively, the details being omitted.
\end{proof}

\begin{proof}[Proof of Theorem \ref{dl.5}]
Proceeding as in the proof of \cite[Theorem 1.1]{dlx07lp},
it is quite believable that \cite[Theorem 1.1]{dlx07lp} may also be true for the variable kernel case,
but to limit the length of this paper, we leave the details to the interested reader.
\end{proof}





\section{Final remark}\label{s4}
We conclude this paper by pointing out some remarks.

First of all, the weak-type space plays very important role in harmonic analysis since it
can sharpen the endpoint weak type estimate for variant important operators.
Therefore, with the help of Lemmas \ref{yl.1} and \ref{yl.2}, we can easily carry out the proof of following two theorems.
But to limit the length of this paper, we leave the details to the interested reader.
\begin{theorem}\label{dl.11}
Let $0<\az\le1$, $n/2<\rho<n$ and $0<\bz<\min\{1/2,\,\az,\,\rho-n/2\}$.
Suppose $\Omega(x,\,z)$ satisfies the $L^{2,\,\az}$-Dini condition or the Lipschitz condition of order $\az$.
Then $\mu_{\Omega,\,S}^\rho$ is bounded from $H^{\frac{n}{n+\bz}}(\rn)$ to $WL^{\frac{n}{n+\bz}}(\rn)$.
\end{theorem}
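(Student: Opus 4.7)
The plan is to establish the weak-type bound uniformly on $(p,\fz,s)$-atoms and then pass to general $f\in H^p(\rn)$, with $p:=n/(n+\bz)$, via the atomic decomposition. The key algebraic feature of the endpoint is the identity $(n+\bz)p=n$, so that for an atom $a$ associated with a ball $B=B(x_0,r)$, the size bound $\|a\|_{L^\fz(\rn)}\le|B|^{-1/p}$ reads $\|a\|_{L^\fz(\rn)}\le Cr^{-(n+\bz)}$.

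First I would prove the single-atom estimate $|\{x\in\rn:\mu^\rho_{\Omega,\,S}(a)(x)>\lz\}|\le C\lz^{-p}$ uniformly in $a$ and $\lz$. On $(64B)^\complement$, Lemma~\ref{yl.1} combined with the size bound yields the pointwise estimate $\mu^\rho_{\Omega,\,S}(a)(x)\le C|x-x_0|^{-(n+\bz)}$, whose super-level set at height $\lz$ has measure at most $C\lz^{-n/(n+\bz)}=C\lz^{-p}$. On $64B$ I would case on the size of $\lz$: when $\lz\le r^{-(n+\bz)}$, the trivial bound $|64B|\sim r^n=(r^{n+\bz})^p\le\lz^{-p}$ does the job; when $\lz>r^{-(n+\bz)}$, Chebyshev at exponent $4$ together with the $L^4$-boundedness of Theorem~A gives $|\{\mu^\rho_{\Omega,\,S}(a)>\lz\}\cap64B|\le C\lz^{-4}\|a\|_{L^4(\rn)}^4\le C\lz^{-4}r^{n-4(n+\bz)}$, which is at most $C\lz^{-p}$ thanks to the identity $(n+\bz)(4-p)=3n+4\bz$ (an immediate consequence of $(n+\bz)p=n$) and matches the previous case exactly at the crossover $\lz=r^{-(n+\bz)}$.

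For general $f\in H^p(\rn)$, I would invoke the atomic decomposition $f=\sum_j\lz_j a_j$ with $\sum_j|\lz_j|^p\le C\|f\|_{H^p(\rn)}^p$ from \cite[Chapter~2]{l95}. The sublinearity $\mu^\rho_{\Omega,\,S}(f)\le\sum_j|\lz_j|\mu^\rho_{\Omega,\,S}(a_j)$ together with the standard $p$-quasi-subadditivity of weak $L^p(\rn)$ for $0<p\le1$, namely $\|\sum_j g_j\|_{WL^p(\rn)}^p\le C_p\sum_j\|g_j\|_{WL^p(\rn)}^p$, and the uniform atom bound from the previous step then give $\|\mu^\rho_{\Omega,\,S}(f)\|_{WL^p(\rn)}^p\le C\sum_j|\lz_j|^p\le C\|f\|_{H^p(\rn)}^p$, which is the desired conclusion.

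I expect the main technical obstacle to be the endpoint-sharp arithmetic in the single-atom step: the two sub-bounds on $64B$ must match cleanly at $\lz=r^{-(n+\bz)}$, and this is possible only because the identity $(n+\bz)p=n$ leaves no slack in the exponents, which is why the verification reduces to the arithmetic identity $(n+\bz)(4-p)=3n+4\bz$. Notably, the direct Chebyshev-at-$p$ strategy used in the proof of Theorem~\ref{dl.3} is unavailable here, because at the endpoint the tail integral $\int_{|x-x_0|>R}|x-x_0|^{-(n+\bz)p}\,dx$ reduces to $\int_{|x|>R}|x|^{-n}\,dx$ and diverges logarithmically; the single-atom level-set method sidesteps this divergence entirely, at the price of invoking the quasi-subadditivity inequality in weak $L^p$.
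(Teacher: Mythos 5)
Your proposal is correct, and it supplies precisely the argument the paper omits: the paper states Theorem \ref{dl.11} without proof, remarking only that it follows ``with the help of Lemma \ref{yl.1},'' and the standard route it alludes to (following \cite{dlx07a}) is exactly yours --- a uniform weak-$(p,p)$ estimate on single atoms obtained by splitting into $64B$ (trivial measure bound for $\lz\le r^{-(n+\bz)}$, Chebyshev at exponent $4$ plus Theorem A otherwise) and $(64B)^\complement$ (level sets of the pointwise bound from Lemma \ref{yl.1}), followed by the Stein--Taibleson--Weiss $p$-quasi-subadditivity of $WL^p(\rn)$ for $p=n/(n+\bz)<1$. Your endpoint arithmetic $(n+\bz)p=n$ and $(n+\bz)(4-p)=3n+4\bz$ checks out, and your observation that the Chebyshev-at-$p$ tail integral diverges logarithmically at this endpoint correctly identifies why Theorem \ref{dl.1} requires $p>n/(n+\bz)$ while only the weak-type conclusion survives at $p=n/(n+\bz)$.
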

\begin{theorem}\label{dl.12}
Let $0<\az\le1$, $n/2<\rho<n$, $2<\lz<\fz$ and $0<\bz<\min\{1/2,\,\az,\,\rho-n/2,\,(\lz-2)n/3\}$.
Suppose $\Omega(x,\,z)$ satisfies the $L^{2,\,\az}$-Dini condition or the Lipschitz condition of order $\az$.
Then $\mu^{\rho,\,\ast}_{\Omega,\,\lz}$ is bounded from $H^{\frac{n}{n+\bz}}(\rn)$ to $WL^{\frac{n}{n+\bz}}(\rn)$.
\end{theorem}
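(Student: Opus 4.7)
Set $p:=n/(n+\bz)$. My plan is to parallel the proof of Theorem \ref{dl.3} (replacing Lemma \ref{yl.1} by Lemma \ref{yl.2} throughout), but to start from the standard $H^p$ atomic decomposition of Definition \ref{d2.11} rather than from the weak Hardy decomposition of Lemma \ref{whp}, and to handle the critical relation $(n+\bz)p=n$, which turns the tail bound of Lemma \ref{yl.2} into the scale-free pointwise estimate
\begin{align*}
|\lz_j|\,\mu^{\rho,\,\ast}_{\Omega,\,\lz}(a_j)(x)\ls\frac{|\lz_j|}{|x-x_j|^{n+\bz}},\qquad x\notin 64B_j,
\end{align*}
for any atom $a_j$ on $B_j:=B(x_j,r_j)$ with coefficient $\lz_j$, since $\|a_j\|_{L^\fz}\le |B_j|^{-1/p}$ and $|B_j|^{-1/p}r_j^{n+\bz}\sim 1$.

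Given $f\in H^p(\rn)$ with atomic decomposition $f=\sum_j\lz_j a_j$ and $\sum_j|\lz_j|^p\le 2\|f\|^p_{H^p}$, I bucket indices by normalised size: for $k\in\zz$, set $J_k:=\{j:2^{k-1}<|\lz_j||B_j|^{-1/p}\le 2^k\}$ and $f_k:=\sum_{j\in J_k}\lz_j a_j$. A standard refinement (ensuring bounded overlap of the $B_j$ within each bucket) yields $\|f_k\|_{L^\fz}\ls 2^k$ and $\sum_{j\in J_k}|B_j|\ls 2^{-kp}\|f\|^p_{H^p}$, exactly matching the two pivotal properties used in the proof of Theorem \ref{dl.3}. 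For each $\lz>0$ pick $k_0\in\zz$ with $2^{k_0}\le\lz<2^{k_0+1}$ and write $f=F_1+F_2$ with $F_1:=\sum_{k\le k_0}f_k$ and $F_2:=\sum_{k>k_0}f_k$. The good part $F_1$ is handled exactly as in Theorem \ref{dl.3}: interpolating the $L^\fz$ bound on $f_k$ against $\|f_k\|_{L^1}\ls 2^{k(1-p)}\|f\|^p_{H^p}$ gives $\|F_1\|_{L^4}\ls\lz^{1-p/4}\|f\|^{p/4}_{H^p}$, and combining the $L^4$-boundedness of $\mu^{\rho,\,\ast}_{\Omega,\,\lz}$ from Theorem A with Chebyshev yields $|\{\mu^{\rho,\,\ast}_{\Omega,\,\lz}(F_1)>\lz/2\}|\ls\lz^{-p}\|f\|^p_{H^p}$. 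For the bad part $F_2$, define $\tilde B_j^k:=B(x_j,\,64(3/2)^{(k-k_0)p/n}r_j)$ and the exceptional set $A_{k_0}:=\bigcup_{k>k_0}\bigcup_{j\in J_k}\tilde B_j^k$; the same geometric summation as in Theorem \ref{dl.3} produces $|A_{k_0}|\ls\lz^{-p}\|f\|^p_{H^p}$, and on $A_{k_0}^\complement$ every summand of $F_2$ satisfies the scale-free pointwise bound above.

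The main obstacle lives in the final step, namely bounding $|\{x\in A_{k_0}^\complement:\mu^{\rho,\,\ast}_{\Omega,\,\lz}(F_2)(x)>\lz/2\}|$ by $\lz^{-p}\|f\|^p_{H^p}$. At the critical exponent $(n+\bz)p=n$ the naive $L^p$-integration strategy of Theorem \ref{dl.3} breaks down, because $\int_{|y|>R}|y|^{-(n+\bz)p}\,dy=\int_{|y|>R}|y|^{-n}\,dy$ diverges, and the geometric ratio $(2/3)^{p(np+\bz p-n)/n}$ that powered the sum in $k$ there degenerates to $1$. My strategy is to apply the $p$-subadditivity $(\sum_j a_j)^p\le\sum_j a_j^p$, valid for $p\le 1$, to deduce the pointwise bound $\mu^{\rho,\,\ast}_{\Omega,\,\lz}(F_2)(x)^p\ls\sum_j|\lz_j|^p|x-x_j|^{-n}$ on $A_{k_0}^\complement$, and then to control this radial sum in weak-$L^1$ by partitioning the level $\lz^p$ among the atoms in proportion to $|\lz_j|^p$ using that each summand $|\lz_j|^p|x-x_j|^{-n}$ has weak-$L^1$ quasinorm $\sim|\lz_j|^p$. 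Since weak-$L^1$ is not $1$-subadditive under arbitrary sums, closing this estimate is the delicate point; in particular, I expect to need a slight enlargement of the dilation exponent from $p/n$ to some $\tau\in(p/n,\,p\log 2/(n\log(3/2)))$ in the definition of $\tilde B_j^k$, so as to gain the spatial separation required to absorb the endpoint degeneracy, at the bearable cost of a modified but still convergent geometric series in the bound on $|A_{k_0}|$.
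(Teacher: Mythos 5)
Your skeleton --- Lemma \ref{yl.2} giving the scale-free tail bound $\mu^{\rho,\,\ast}_{\Omega,\,\lz}(a_j)(x)\ls|x-x_j|^{-(n+\bz)}$ at the critical exponent $(n+\bz)p=n$, plus Theorem A for the local part --- is the right starting point, but the step you yourself flag as delicate is a genuine gap, and the fix you sketch does not close it. Passing to $\mu^{\rho,\,\ast}_{\Omega,\,\lz}(F_2)(x)^p\ls\sum_j|\lz_j|^p|x-x_j|^{-n}$ and then summing in weak-$L^1$ cannot work: with $S:=\sum_j|\lz_j|^p$, distributing a level $\gz$ among the atoms in proportion to $|\lz_j|^p$ gives $|\{|\lz_j|^p|x-x_j|^{-n}>\gz|\lz_j|^p/S\}|\ls S/\gz$ for \emph{every} $j$, so the union bound produces a factor equal to the number of atoms; the sharp substitute (the Stein--N.\ Weiss lemma on sums of weak-$L^1$ functions) carries an unavoidable logarithmic loss, and enlarging the dilation exponent $\tau$ of the balls $\tilde B^k_j$ cannot remove it, because the obstruction is the summation of the tails, not the size of $A_{k_0}$. (A secondary issue: the bounded-overlap ``refinement'' of your buckets $J_k$ is not available for a general $H^p$ atomic decomposition; that property is specific to the $WH^p$ decomposition of Lemma \ref{whp}.)

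The missing ingredient is the $p$-subadditivity of $WL^p$ for $0<p<1$, namely $\|\sum_jg_j\|^p_{WL^p(\rn)}\le\frac{2-p}{1-p}\sum_j\|g_j\|^p_{WL^p(\rn)}$ (see \cite{l95}). With it the theorem follows directly from the plain atomic decomposition $f=\sum_j\lz_ja_j$, with no bucketing, no $F_1/F_2$ split and no exceptional set: write $\mu^{\rho,\,\ast}_{\Omega,\,\lz}(f)\le\sum_j|\lz_j|\,\mu^{\rho,\,\ast}_{\Omega,\,\lz}(a_j)\chi_{64B_j}+\sum_j|\lz_j|\,\mu^{\rho,\,\ast}_{\Omega,\,\lz}(a_j)\chi_{(64B_j)^\complement}$. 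Each local term satisfies $\|\mu^{\rho,\,\ast}_{\Omega,\,\lz}(a_j)\chi_{64B_j}\|^p_{L^p(\rn)}\ls1$ by H\"older's inequality and Theorem A, so the first sum lies in $L^p(\rn)\subset WL^p(\rn)$ with $p$-th power of the quasinorm $\ls\sum_j|\lz_j|^p$. Each tail term is $\ls|\lz_j|\,|x-x_j|^{-(n+\bz)}$ by Lemma \ref{yl.2} and the normalization $\|a_j\|_{L^\fz(\rn)}r_j^{n+\bz}\le1$, and $\||x-x_j|^{-(n+\bz)}\|_{WL^{n/(n+\bz)}(\rn)}=c_n<\fz$; the $p$-subadditivity then bounds the second sum in $WL^p(\rn)$ by $C\sum_j|\lz_j|^p\ls\|f\|^p_{H^{n/(n+\bz)}(\rn)}$. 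This is presumably the argument the paper intends when it says Theorem \ref{dl.12} follows ``easily'' from Lemma \ref{yl.2}; your proposal has the correct pointwise estimates but does not reach a valid conclusion from them.
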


Secondly, by using the interpolation theorem of sublinear operator (see \cite[p. 63]{l95})
between Theorem \ref{dl.2} (resp. Theorem \ref{dl.5}) and Theorem A, we get immediately the following
$L^p$ boundedness of $\mu_{\Omega,\,S}^\rho$ (resp. $\mu^{\rho,\,\ast}_{\Omega,\,\lz}$) for $1<p<4$.
\begin{corollary}\label{tl.1}
Let $1<p<4$, $n/2<\rho<n$ and $\Omega(x,\,z)\in L^\fz(\rn) \times L^2(S^{n-1})$.
If
\begin{align*}
\int_0^1\frac{\omega_2(\delta)}{\delta}(1+|\log{\delta}|)^\sz\,d\delta<\fz \ \mathrm{for \ some} \ \sz>1,
\end{align*}
then $\mu_{\Omega,\,S}^\rho$ is bounded on $L^p(\rn)$.
\end{corollary}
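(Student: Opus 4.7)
\textbf{Proof proposal for Corollary \ref{tl.1}.}
The plan is to invoke the interpolation theorem for sublinear operators between Hardy spaces and Lebesgue spaces, as cited from \cite[p.\ 63]{l95}, using Theorem \ref{dl.2} as the lower endpoint and Theorem A as the upper endpoint. First, note that the Dini-type integrability condition assumed on $\omega_2$ implies, via Theorem \ref{dl.2}, that $\mu_{\Omega,\,S}^\rho$ is bounded from $H^1(\rn)$ to $L^1(\rn)$. Second, since the same condition implies in particular that $\Omega(x,z)\in L^\fz(\rn)\times L^2(S^{n-1})$, Theorem A applies with the choice $p=4$, giving boundedness of $\mu_{\Omega,\,S}^\rho$ on $L^4(\rn)$.

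With these two endpoint estimates in hand, I would apply the interpolation theorem of sublinear operators to produce a bounded extension on the intermediate Lebesgue spaces. Concretely, for any $p\in(1,\,4)$, choose $\tz\in(0,\,1)$ such that
\begin{align*}
\frac{1}{p}=(1-\tz)\cdot 1+\tz\cdot\frac{1}{4}.
\end{align*}
The interpolation result (applied to the pair $(H^1(\rn),\,L^4(\rn))$) then yields a constant $C>0$ such that $\|\mu_{\Omega,\,S}^\rho(f)\|_{L^p(\rn)}\le C\|f\|_{L^p(\rn)}$ for all $f$ in a dense subclass of $L^p(\rn)$, which extends to all of $L^p(\rn)$ by the standard density argument since $\mu_{\Omega,\,S}^\rho$ is sublinear and nonnegative.

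The only mildly delicate point is that the interpolation statement must be invoked in its sublinear form valid for the couple $(H^1,\,L^q)$ with $q>1$, rather than for a pair of Lebesgue spaces; this is precisely the content of the result cited from \cite[p.\ 63]{l95}, so no additional work is required here. Everything else is a routine bookkeeping exercise: checking that the integrability hypothesis on $\omega_2$ is the common hypothesis in both Theorem \ref{dl.2} and Theorem A (for $p=4$), and that the resulting extension coincides with the original definition of $\mu_{\Omega,\,S}^\rho$ on $L^p(\rn)\cap H^1(\rn)$, which follows from the pointwise definition of the operator via the square-function integral. Hence the conclusion of Corollary \ref{tl.1} follows directly.
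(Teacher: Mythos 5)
Your proposal is correct and follows exactly the paper's route: the paper likewise obtains Corollary \ref{tl.1} by applying the interpolation theorem for sublinear operators from \cite[p.\ 63]{l95} between the $H^1(\rn)\to L^1(\rn)$ bound of Theorem \ref{dl.2} and the $L^4(\rn)$ bound of Theorem A. The only cosmetic slip is your remark that the Dini condition ``implies'' $\Omega\in L^\fz(\rn)\times L^2(S^{n-1})$; that membership is a separate explicit hypothesis of the corollary, but this does not affect the argument.
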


\begin{corollary}\label{tl.2}
Let $1<p<4$, $n/2<\rho<n$, $2<\lz<\fz$ and $\Omega(x,\,z)\in L^\fz(\rn) \times L^2(S^{n-1})$.
If
\begin{align*}
\int_0^1\frac{\omega_2(\delta)}{\delta}(1+|\log{\delta}|)^\sz\,d\delta<\fz \ \mathrm{for \ some} \ \sz>1,
\end{align*}
then $\mu^{\rho,\,\ast}_{\Omega,\,\lz}$ is bounded is bounded on $L^p(\rn)$.
\end{corollary}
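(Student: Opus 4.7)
The strategy is to apply real interpolation directly between the two endpoint boundedness results for $\mu^{\rho,\,\ast}_{\Omega,\,\lz}$ already at our disposal. At the lower endpoint, Theorem \ref{dl.5} provides the mapping $\mu^{\rho,\,\ast}_{\Omega,\,\lz}: H^1(\rn) \to L^1(\rn)$ under exactly the Dini-type integrability assumption on $\omega_2$ that is made in the corollary. At the upper endpoint, Theorem A gives $\mu^{\rho,\,\ast}_{\Omega,\,\lz}: L^4(\rn) \to L^4(\rn)$ under the weaker hypothesis $\Omega\in L^\fz(\rn)\times L^2(S^{n-1})$, which is certainly implied by the assumptions of the corollary together with $n/2<\rho<n$ and $2<\lz<\fz$. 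Thus both endpoints are available simultaneously, and it only remains to glue them.

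First I would record that $\mu^{\rho,\,\ast}_{\Omega,\,\lz}$ is a sublinear operator: the square-root expression is an $L^2$-norm (with respect to the weighted measure $(t/(t+|x-y|))^{\lz n}\,t^{-n-2\rho-1}\,dy\,dt$ on ${\mathbb R}^{n+1}_+$) of a linear functional of $f$, so Minkowski's inequality gives $\mu^{\rho,\,\ast}_{\Omega,\,\lz}(f+g)(x)\le \mu^{\rho,\,\ast}_{\Omega,\,\lz}(f)(x)+\mu^{\rho,\,\ast}_{\Omega,\,\lz}(g)(x)$. Second, I would invoke the interpolation theorem for sublinear operators recorded in \cite[p.\,63]{l95}, which is tailor-made for endpoints of mixed type: if a sublinear operator is bounded from $H^1(\rn)$ to $L^1(\rn)$ and from $L^{p_1}(\rn)$ to $L^{p_1}(\rn)$ for some $p_1>1$, then it is bounded on $L^p(\rn)$ for every $1<p<p_1$. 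Applying this with $p_1=4$ yields boundedness on $L^p(\rn)$ throughout $1<p<4$, which is exactly the conclusion of the corollary.

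The only subtle point, and really not an obstacle, is that one endpoint lives on a Hardy space while the other lives on a Lebesgue space, so classical Marcinkiewicz interpolation is not directly applicable. The version cited from \cite[p.\,63]{l95} handles this asymmetry by means of the atomic decomposition of $H^1(\rn)$ together with a Calder\'on--Zygmund decomposition of the input at each dyadic level; once it is quoted, no further computation is required, and the argument for Corollary \ref{tl.2} is identical in form to the argument that would prove Corollary \ref{tl.1} from Theorem \ref{dl.2} and Theorem A.
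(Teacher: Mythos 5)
Your proposal is correct and is exactly the paper's argument: the paper obtains Corollary \ref{tl.2} by applying the interpolation theorem for sublinear operators from \cite[p.~63]{l95} between the $H^1(\rn)\to L^1(\rn)$ bound of Theorem \ref{dl.5} and the $L^4(\rn)\to L^4(\rn)$ bound of Theorem A. Your additional remarks on sublinearity and on the mixed Hardy--Lebesgue endpoints are accurate but not needed beyond what the cited interpolation theorem already supplies.
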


And lastly,, by the $WH^1$-$WL^1$ boundedness of $\mu_{\Omega,\,S}^\rho$ and $\mu^{\rho,\,\ast}_{\Omega,\,\lz}$ (see Theorems \ref{dl.3} and \ref{dl.6})
and the argument same as in \cite[Remark 3]{dlx07a} (see also \cite[Remark 1.7]{dlx07lp}),
we have the following two corollaries.

\begin{corollary}\label{tl.3}
Let $0<\az\le1$, $n/2<\rho<n$.
Suppose $\Omega(x,\,z)$ satisfies the $L^{2,\,\az}$-Dini condition or the Lipschitz condition of order $\az$.
Then $\mu_{\Omega,\,S}^\rho$ is bounded from $WH^1(\rn)$ to $WL^1(\rn)$.
\end{corollary}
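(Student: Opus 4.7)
The plan is to deduce Corollary \ref{tl.3} essentially for free from Theorem \ref{dl.3}. The hypotheses on $\Omega$, $\az$, and $\rho$ in the corollary coincide verbatim with those of Theorem \ref{dl.3}, and the only additional parameter present in the theorem but suppressed in the corollary is the auxiliary exponent $\bz$. Since $\az > 0$ and $\rho > n/2$ force $\min\{1/2,\az,\rho-n/2\} > 0$, I can pick some $\bz \in (0,\min\{1/2,\az,\rho-n/2\})$; for any such $\bz$, the inequality $n/(n+\bz) < 1$ is automatic, so $p = 1$ lies in the permissible range $(n/(n+\bz),\,1]$ of Theorem \ref{dl.3}. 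Invoking that theorem at $p = 1$ directly yields the $WH^1(\rn) \to WL^1(\rn)$ boundedness of $\mu_{\Omega,\,S}^\rho$ claimed by the corollary.

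Should one prefer an independent derivation along the lines indicated in \cite[Remark 3]{dlx07a}, the argument is a verbatim repetition of the $p=1$ case of the proof of Theorem \ref{dl.3}. Given $f \in WH^1(\rn)$ and $\lz > 0$, pick $k_0 \in \zz$ with $2^{k_0} \le \lz < 2^{k_0+1}$, and decompose $f = F_1 + F_2$ according to Lemma \ref{whp}. One controls $F_1$ via the $L^4$ boundedness of $\mu_{\Omega,\,S}^\rho$ from Theorem A, the finite-overlap property of the supports $B_i^k$, and Chebyshev's inequality; one controls $F_2$ via the pointwise off-diagonal estimate from Lemma \ref{yl.1} on the complement of the enlarged union $A_{k_0} := \bigcup_{k > k_0}\bigcup_i \tilde B_i^k$, integrating the decay $r^{n+\bz}/|x - x_i^k|^{n+\bz}$ and summing the resulting geometric series, whose convergence is guaranteed by $1 > n/(n+\bz)$.

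The only subtle point, and hence the main (minor) obstacle, is the existence of a suitable $\bz$, which is ensured by the strict inequalities $\az > 0$ and $\rho > n/2$; this same positivity of $\bz$ is precisely what drives the summability of the geometric series in the direct argument. Once $\bz$ is chosen, no further technical work is required beyond the appeal to Theorem \ref{dl.3}.
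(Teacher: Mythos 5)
Your proposal is correct and matches the paper's own (one-line) derivation: the paper likewise obtains Corollary \ref{tl.3} by reading off the $p=1$ case of Theorem \ref{dl.3}, the auxiliary exponent $\bz$ being available in $(0,\min\{1/2,\az,\rho-n/2\})$ precisely because $\az>0$ and $\rho>n/2$, so that $n/(n+\bz)<1$. Your optional second paragraph is just a restatement of the $p=1$ instance of the proof of Theorem \ref{dl.3} and adds nothing that needs checking.
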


\begin{corollary}\label{tl.4}
Let $0<\az\le1$, $n/2<\rho<n$, $2<\lz<\fz$.
Suppose $\Omega(x,\,z)$ satisfies the $L^{2,\,\az}$-Dini condition or the Lipschitz condition of order $\az$.
Then $\mu^{\rho,\,\ast}_{\Omega,\,\lz}$ is bounded from $WH^1(\rn)$ to $WL^1(\rn)$.
\end{corollary}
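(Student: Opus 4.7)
The plan is to derive Corollary \ref{tl.4} as the endpoint specialization $p=1$ of Theorem \ref{dl.6}, coupled with an elementary choice of the auxiliary parameter $\bz$ that allows us to drop $\bz$ from the statement entirely. The observation is that the hypotheses of the corollary are precisely what is needed to produce some admissible $\bz$ for Theorem \ref{dl.6}, at which point the case $p=1$ becomes automatic.

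First, I would note that under the hypotheses $0<\az\le 1$, $n/2<\rho<n$, and $2<\lz<\fz$, each of the four quantities $1/2$, $\az$, $\rho-n/2$, and $(\lz-2)n/3$ is strictly positive. Consequently, I can fix
\begin{align*}
\bz:=\tfrac{1}{2}\min\lf\{1/2,\,\az,\,\rho-n/2,\,(\lz-2)n/3\r\}>0,
\end{align*}
which is an admissible choice in Theorem \ref{dl.6}. Since $\bz>0$, one has $n/(n+\bz)<1$, so the endpoint $p=1$ falls within the range $n/(n+\bz)<p\le 1$ required by that theorem. Applying Theorem \ref{dl.6} with this specific $\bz$ and $p=1$ yields exactly the claimed $WH^1(\rn)\to WL^1(\rn)$ boundedness of $\mu^{\rho,\,\ast}_{\Omega,\,\lz}$.

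If one prefers an approach in the spirit of the argument cited from \cite[Remark 3]{dlx07a} rather than merely citing Theorem \ref{dl.6}, the self-contained route would be to re-run the proof of Theorem \ref{dl.3}/Theorem \ref{dl.6} verbatim at $p=1$. Namely, given $f\in WH^1(\rn)$ and $\lz'>0$, pick $k_0$ with $2^{k_0}\le\lz'<2^{k_0+1}$, invoke the atomic decomposition Lemma \ref{whp} to write $f=F_1+F_2$ with $F_1:=\sum_{k\le k_0}\sum_i b_i^k$ and $F_2:=\sum_{k>k_0}\sum_i b_i^k$, estimate $F_1$ via the $L^4$-boundedness of $\mu^{\rho,\,\ast}_{\Omega,\,\lz}$ (Theorem A) and Chebyshev's inequality, and for $F_2$ enlarge the supporting balls to $\tilde B_i^k:=B(x_i^k,64(3/2)^{(k-k_0)/n}r_i^k)$ and control the tail using the pointwise estimate of Lemma \ref{yl.2}.

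The main (and essentially only) obstacle is purely notational: the constants in Lemma \ref{yl.2} and hence in Theorem \ref{dl.6} depend on the chosen $\bz$, and one must verify that for $p=1$ the geometric-series sum over $k>k_0$ in the final step of the tail estimate converges, which requires $1>n/(n+\bz)$, i.e.\ $\bz>0$ --- a condition guaranteed by the admissible choice above. Since this is trivially satisfied, the argument goes through without modification, completing the proof.
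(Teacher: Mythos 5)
Your proof is correct and follows essentially the same route as the paper: Corollary \ref{tl.4} is obtained as the case $p=1$ of Theorem \ref{dl.6} after fixing an admissible $\bz>0$, whose existence is guaranteed by the hypotheses $0<\az\le1$, $n/2<\rho<n$ and $\lz>2$. Your explicit choice of $\bz$ and the verification that $n/(n+\bz)<1$ supply the only (trivial) detail that the paper leaves implicit in its one-line justification.
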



%
%
%
%
%


\noindent Li Bo
%
%
%

\noindent{E-mail }: \texttt{bli.math@outlook.com}

\bigskip


\begin{thebibliography}{30}




%


\vspace{-0.3cm}
\bibitem{cz55}
Calder\'{o}n, Alberto Pedro; Zygmund, Antoni Szczepan:
On a problem of Mihlin.
Trans. Amer. Math. Soc. 78, (1955), 209-224.

\vspace{-0.3cm}
\bibitem{cz56}
Calder\'{o}n, Alberto Pedro; Zygmund, Antoni Szczepan:
On singular integrals.
Amer. J. Math. 78 (1956), 289-309.


%

%



\vspace{-0.3cm}
\bibitem{cd11}
Chen, Yanping; Ding, Yong:
$L^p$ boundedness for Littlewood-Paley operators with rough variable kernels.
J. Math. Anal. Appl. 377 (2011), no. 2, 889-904.



\vspace{-0.3cm}
\bibitem{dll07}
Ding, Yong; Lin, Chin-Cheng; Lin, Ying-Chieh:
Erratum: "On Marcinkiewicz integral with variable kernels'' [Indiana Univ. Math. J. 53 (2004), no. 3, 805-821; MR2086701] by Ding, C.-C. Lin and S. Shao.
Indiana Univ. Math. J. 56 (2007), no. 2, 991-994.

\vspace{-0.3cm}
\bibitem{dlx07a}
Ding, Yong; Lu, Shanzhen; Xue, Qingying:
Parametrized area integrals on Hardy spaces and weak Hardy spaces.
Acta Math. Sin. (Engl. Ser.) 23 (2007), no. 9, 1537-1552.

\vspace{-0.3cm}
\bibitem{dlx07lp}
Ding, Yong; Lu, Shanzhen; Xue, Qingying:
Parametrized Littlewood-Paley operators on Hardy and weak Hardy spaces.
Math. Nachr. 280 (2007), no. 4, 351-363.


\vspace{-0.3cm}
\bibitem{l95}
Lu, Shanzhen:
Four Lectures on Real $H^p$ Spaces
(World Scientific Publishing Co., Inc., River Edge, NJ, 1995).

%
%
%
%
%












%
%
%
%

%




%
%



%
%
%
%
%





%
%
%
%




%
%


%
%
%
%
%
%
%
%
%


\end{thebibliography}
\end{document}